\newtheorem{theorem}{Theorem}[section]
\newtheorem{proposition}[theorem]{Proposition}
\newtheorem{lemma}[theorem]{Lemma}
\newtheorem{remark}[theorem]{Remark}
\numberwithin{equation}{section}
\def\R{{\mathbb R}}
\def\E{{{\mathbb E}\,}}
\def\N{{\mathbb N}}
\def\Var{{\mathop {{\rm Var\, }}}}
\def\cF{{\cal F}}
\def\Om{{\Omega}}
\def\square{{\vcenter{\vbox{\hrule height.3pt
        \hbox{\vrule width.3pt height5pt \kern5pt
           \vrule width.3pt}
        \hrule height.3pt}}}}
\newenvironment{proof}[1][Proof]{\noindent\textit{#1.} }{\hfill \rule{0.5em}{0.5em}}
\begin{document}

\title{\bf Second order limit laws for occupation times of the fractional Brownian motion}
\date{\empty}
\author{\ Fangjun Xu\\ \\
School of Finance and Statistics \\
East China Normal University \\
Shanghai, China 200241}

\maketitle

\begin{abstract}
\noindent We prove second order limit laws for (additive)
functionals of the $d$-dimensional fractional Brownian motion
with Hurst index $H=\frac{1}{d}$, using the method of moments, extending the Kallianpur-Robbins law.

\vskip.2cm \noindent {\it Keywords:} fractional Brownian motion, short range dependence,
limit law, method of moments.

\vskip.2cm \noindent {\it Subject Classification: Primary 60F05;
Secondary 60G22.}
\end{abstract}

\section{Introduction}
Let $\{B^H(t)=(B^{H,1}(t),\cdots, B^{H,d}(t)),\; t\geq 0\}$ be a $d$-dimensional fractional Brownian motion (fBm) with Hurst index $H$ in $(0,1)$. If $Hd=1$, then the local time of fractional Brownian motion $B^H$ does not exist. This is called the critical case. For example, the two-dimensional Brownian motion ($H=\frac 12$ and $d=2$) does not have local time at the origin. There is a lot of work on limit theorems for two-dimensional Brownian motion. Kallianpur and Robbins \cite{KaRo} proved that,  for any bounded and integrable function $f:\mathbb{R}^2\rightarrow \mathbb{R}$, 
\[
\frac 1{\log n} \int_0^n f(B^{\frac{1}{2}}(s))\,ds \overset{\mathcal{L}}{\longrightarrow } \frac  {Z}{2\pi} \int_{\mathbb{R}^2 }f(x)\,dx
\]
as $n$ tends to infinity, where $\overset{\mathcal{L}}{\longrightarrow }$ denotes the convergence in law and $Z$ is a random variable with exponential distribution of parameter $1$.   

A functional version of this result was given by Kasahara and Kotani in \cite{KaKo}. They also proved  the second order result. That is, when $\int_{\R^2 }f(x)\, dx=0$, 
\[
\frac{1}{\sqrt{n}}\int^{nte^{2nt}}_0 f(B^{\frac{1}{2}}(s))\, ds
\]
is weakly $M_1$-convergent to $\sqrt{\langle f\rangle}\, W(\ell(M^{-1}(t)))$ as $n$ tends to infinity, where 
\[
\langle f\rangle=-\frac{4}{\pi}\int_{\R^2}\int_{\R^2} f(x) f(y)\log|x-y|\, dx\, dy,
\] 
$W(t)$ is a one-dimensional Brownian motion, $\ell(t)$ is the local time at $0$ of another one-dimensional Brownian motion $b_1(t)$ independent of $W(t)$, and $M(t)=\max\limits_{0\leq s\leq t} b_1(s)$.  In \cite{KaKo}, Kasahara and Kotani also pointed out that
\begin{equation} \label{kkk}
\frac{1}{\sqrt{n}}\int^{e^{nt}}_0 f(B^{\frac{1}{2}}(s))\, ds \overset{f.d.}{\longrightarrow }\sqrt{\langle f\rangle}\, W(\ell(M^{-1}(t)))
\end{equation}
as $n$ tends to infinity, where $\overset{f.d.}{\longrightarrow }$ denotes the convergence of finite dimensional distributions.

The above results were extended to Markov processes, see \cite{Ka}, \cite{Kasa} and references therein. After that, the Kallianpur-Robbins law was extended to fractional Brownian motion case by K\^ono in \cite{Kon}:
\[
\frac{1}{\log t}\int^t_0 f(B^H(s))\, ds\overset{\mathcal{L}}{\longrightarrow } \Big(\frac{1}{(2\pi)^{\frac{d}{2}}}\int_{\R^d} f(x)\, dx\Big)\; Z
\] 
as $t$ tends to infinity. The corresponding functional version was obtained by Kasahara and Kosugi in  \cite{KaKos}:
\[
\frac{1}{n}\int^{e^{nt}}_0 f(B^H(s))\, ds\overset{f.d.}{\longrightarrow } \Big(\frac{1}{(2\pi)^{\frac{d}{2}}}\int_{\R^d} f(x)\, dx\Big)\; Z(t)
\]
as $n$ tends to infinity, where $Z(t)=\ell(M^{-1}(t))$. The reason of using the normalizing factor $\frac{1}{n}$ instead of $\frac{1}{\log n}$ was pointed out in Remark 1.1 of \cite{KaKos}. However, the corresponding second order limit law ($\int_{\R^d} f(x)\, dx=0$) is still open. 

In this paper, we will prove second order limit laws for the above result in \cite{KaKos}. The following two theorems are the main results of this paper. One is the limit theorem for random variables. The other is the convergence of finite dimensional distributions for stochastic processes.
\begin{theorem} \label{main} Suppose that $f$ is bounded, $\int_{\R^d} f(x)\, dx=0$ and $\int_{\R^d} |f(x)||x|^{\beta}\, dx<\infty$ for some positive constant $\beta>0$.  Then, for any $t>0$,
\[
\frac{1}{\sqrt{n}}\int^{e^{nt}}_0 f(B^H(s))\, ds \overset{\mathcal{L}}{\longrightarrow } C_{f,d}\, \sqrt{Z(t)}\, \eta
\]
as $n$ tends to infinity, where 
\[
C_{f,d}=\bigg(\frac{d\, \Gamma(\frac{d}{2})}{\pi^{\frac{d}{2}}(2\pi)^{d}} \int_{\R^d} |\widehat{f}(x)|^2 |x|^{-d}\, dx \bigg)^{\frac{1}{2}},
\]
$\widehat{f}$ is the Fourier transform of $f$, and $\eta$ is a standard normal random variable independent of $Z(t)$.
\end{theorem}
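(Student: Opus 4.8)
The proof will proceed by the method of moments. First I would record the target moments: since $\eta$ is standard normal and independent of $Z(t)$, the limit $C_{f,d}\sqrt{Z(t)}\,\eta$ has vanishing odd moments, while for $m=2k$
\[
\E\big[(C_{f,d}\sqrt{Z(t)}\,\eta)^{2k}\big]=C_{f,d}^{2k}\,\frac{(2k)!}{2^k k!}\,\E\big[Z(t)^k\big].
\]
Because $Z(t)=\ell(M^{-1}(t))$ has finite exponential moments (indeed, for fixed $t$ it is exponentially distributed, being the local time accumulated at $0$ before a hitting time), these mixed moments satisfy Carleman's condition and the limit law is determined by its moments. It therefore suffices to show that, as $n\to\infty$, the odd moments of $\frac{1}{\sqrt n}\int_0^{e^{nt}}f(B^H(s))\,ds$ tend to $0$ and that its $(2k)$-th moment converges to the right-hand side above.

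The starting point for the moment computation is the Fourier representation. Writing $T=e^{nt}$, inverting $f$ through its Fourier transform and using that $\sum_j\xi_j\cdot B^H(s_j)$ is centered Gaussian,
\[
\E\Big[\Big(\int_0^{T}f(B^H(s))\,ds\Big)^m\Big]=\frac{1}{(2\pi)^{md}}\int_{[0,T]^m}\int_{\R^{md}}\prod_{j=1}^m\widehat f(\xi_j)\,\exp\Big(-\tfrac12\Var\Big(\sum_{j=1}^m\xi_j\cdot B^H(s_j)\Big)\Big)\,d\xi\,ds.
\]
I would then split $[0,T]^m$ according to the ordering of the $s_j$, pass to the increment variables $B^H(s_j)-B^H(s_{j-1})$ on each piece, and use the strong local nondeterminism property of fBm to bound the Gaussian exponent from above and below by sums of the individual increment variances $(s_j-s_{j-1})^{2H}$. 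A logarithmic substitution $s_j=e^{n r_j}$ then exposes the factor $\tfrac1n$ per scale that matches the $\tfrac1{\sqrt n}$ normalization.

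The decisive structural input is $\widehat f(0)=\int_{\R^d}f=0$: a time point whose frequency is not tied to a neighbor forces an evaluation of $\widehat f$ near $0$ and contributes at lower order. A power-counting argument, controlled by local nondeterminism, shows that configurations containing an unpaired point or a cluster of three or more nearby points are negligible, so odd moments vanish and, for $m=2k$, only the $\frac{(2k)!}{2^k k!}$ perfect matchings of the time points into close pairs survive. For a single pair at times $s<s'$ with gap $w=s'-s\ll s$, the covariance $\E[B^{H,i}(s)B^{H,j}(s')]=\tfrac12\delta_{ij}(s^{2H}+s'^{2H}-|s'-s|^{2H})$ concentrates the frequencies on $\xi'\approx-\xi$, where $\widehat f(\xi)\widehat f(\xi')\to|\widehat f(\xi)|^2$, and the integral factorizes: the center-of-mass integral gives $\int_{\R^d}e^{-\frac12 s^{2H}|\lambda|^2}\,d\lambda=(2\pi)^{d/2}s^{-Hd}=(2\pi)^{d/2}s^{-1}$, while the gap integral gives $\int_0^\infty e^{-\frac12 w^{2H}|\xi|^2}\,dw=\tfrac{d}{2}\,2^{d/2}\Gamma(\tfrac d2)\,|\xi|^{-d}$. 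Both identities use the critical relation $Hd=1$ in an essential way, and assembling the constants reproduces precisely $C_{f,d}^2$ per pair together with the scale measure $\tfrac{ds}{s}$.

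It remains to assemble the $k$ pairs. The $k$ scale integrals $\tfrac{ds}{s}$, restricted by the requirement that $B^H$ actually be near the support of $f$, are governed by the same radial clock that produces the Kallianpur--Robbins limit in \cite{KaKos}; carrying out the first-order analysis of this multiscale integral reproduces the $k$-th moment $\E[Z(t)^k]$ of the local-time clock $Z(t)=\ell(M^{-1}(t))$ rather than $(\E Z(t))^k$, which is exactly the source of the variance mixture in the limit. Combined with the $\frac{(2k)!}{2^k k!}$ matchings and the factor $C_{f,d}^{2k}$, this yields the asserted $(2k)$-th moment. I expect the main obstacle to be twofold: first, using strong local nondeterminism to rigorously decouple distinct clusters and to bound the error from non-paired and higher-order configurations, since fBm has dependent increments and no Markov structure; and second, extracting the local-time moments $\E[Z(t)^k]$ from the $k$-fold multiscale time integral, which requires a careful comparison with the first-order Kallianpur--Robbins asymptotics and with the moment structure of Brownian local time.
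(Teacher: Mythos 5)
Your skeleton coincides with the paper's: method of moments (with the observation that the Laplace-type limit is moment determinate), Fourier representation of $\E(F_n)^m$, passage to increments on the ordered simplex, elimination of unpaired frequencies using $\widehat f(0)=0$ together with $|\widehat f(x)|\le c_\alpha(|x|^\alpha\wedge 1)$ --- this is exactly the chaining argument of Proposition \ref{chain} --- and a single-pair computation in which both of your identities (the gap integral $\int_0^\infty e^{-\frac12 w^{2H}|\xi|^2}dw=\frac d2 2^{d/2}\Gamma(\frac d2)|\xi|^{-d}$ and the center-of-mass integral $(2\pi)^{d/2}s^{-1}$) use $Hd=1$ correctly and assemble to $C_{f,d}^2$. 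However, the two steps you flag as ``the main obstacle'' are genuinely the crux, and the tools you propose for them would not close the argument.

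First, strong local nondeterminism cannot deliver the constant. Lemma \ref{lnd} gives only $\kappa_1\sum|y_i|^2(\Delta s_i)^{2H}\le\Var\le\kappa_2\sum|y_i|^2(\Delta s_i)^{2H}$ with $\kappa_1<\kappa_2$, so sandwiching $\exp(-\frac12\Var)$ between the two comparison kernels identifies the order of the $m$-th moment but leaves an unremovable multiplicative gap; it is used in the paper only for the negligible configurations. The exact factorization over pairs requires showing $\Var=(1+o(1))\sum|y_i|^2(\Delta s_i)^{2H}$ on the dominant domain, and the paper gets this from the short-range dependence estimate of Lemma \ref{cov} for increments of fBm with $H<\frac12$: after restricting to configurations where the even (microscopic) gaps lie in $(n^{-1},n)$, the odd (macroscopic) gaps exceed $n^2$ and have pairwise ratios larger than $\gamma$, every cross-covariance $a_{ij}$ is at most $(\Delta s_i)^H(\Delta s_j)^H$ times $O(\gamma^{-H}+n^{-\theta})$, and Cauchy--Schwarz gives the asymptotic diagonalization; the matching lower bound additionally uses the pointwise inequality (\ref{lower}). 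Second, your worry about ``extracting $\E[Z(t)^k]$'' by comparison with the Kallianpur--Robbins asymptotics and Brownian local time is misplaced for this fixed-$t$ statement (that comparison is only needed for Theorem \ref{main0}). Here the resolution is elementary, but it hinges on a point your bookkeeping obscures: the surviving scale measure is $\prod_j d(\Delta s_{2j-1})/\Delta s_{2j-1}$ over the \emph{consecutive gaps between pairs}, constrained by $\sum_j\Delta s_{2j-1}<e^{nt}$, i.e.\ essentially by the maximum of the logarithms; this integral is $(nt)^k(1+o(1))$, not $(nt)^k/k!$, and that extra $k!$ relative to independent ordered locations is precisely what turns $(\E Z(t))^k$ into $\E[Z(t)^k]=k!\,t^k$ and produces the mixture. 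Combined with the $m!$ from ordering and Lemma \ref{m}, this gives $C_{f,d}^{2k}(2k)!\,t^k/2^k$ directly, with no input from the Brownian case. With your matching count $\frac{(2k)!}{2^kk!}$ and a location measure $\prod d\sigma_i/\sigma_i$ taken at face value you would land on $\frac{(2k)!}{2^kk!}C_{f,d}^{2k}t^k$, short by $k!$; so this step needs the gap (rather than location) parametrization to be done correctly.
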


\begin{theorem} \label{main0} Suppose that $f$ is bounded, $\int_{\R^d} f(x)\, dx=0$ and $\int_{\R^d} |f(x)||x|^{\beta}\, dx<\infty$ for some positive constant $\beta>0$.  Then
\begin{align*} \label{main0eq}
\frac{1}{\sqrt{n}}\int^{e^{nt}}_0 f(B^H(s))\, ds \overset{f.d.}{\longrightarrow } C_{f,d}\, W(\ell(M^{-1}(t)))
\end{align*}
as $n$ tends to infinity, where $W(t)$ is a one-dimensional Brownian motion independent of $B^H(\cdot)$ and $\ell(M^{-1}(t))$.
\end{theorem}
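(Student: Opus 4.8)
The plan is to prove the finite-dimensional convergence by the method of moments, bootstrapping from the single-time result of Theorem~\ref{main}. Fix $0<t_1<\cdots<t_k$ and write $F_n(t)=\frac{1}{\sqrt{n}}\int_0^{e^{nt}}f(B^H(s))\,ds$. The prospective limit vector $\big(W(Z(t_1)),\ldots,W(Z(t_k))\big)$, with $Z(t)=\ell(M^{-1}(t))$, is moment-determinate: conditionally on the clock it is Gaussian, and the clock has exponential-type tails, so its joint moment generating function is finite near the origin. Hence it suffices to prove that every mixed moment converges,
\[
\E\Big[\prod_{j=1}^k F_n(t_j)^{m_j}\Big]\longrightarrow C_{f,d}^{\,\sum_j m_j}\,\E\Big[\prod_{j=1}^k W(Z(t_j))^{m_j}\Big],\qquad m_1,\ldots,m_k\in\N .
\]
I would first reduce to increments: with $t_0=0$, set $G_n(j)=F_n(t_j)-F_n(t_{j-1})=\frac{1}{\sqrt{n}}\int_{e^{nt_{j-1}}}^{e^{nt_j}}f(B^H(s))\,ds$, so that expanding $F_n(t_j)=\sum_{l\le j}G_n(l)$ reduces the problem to the joint moments of the increments $G_n(1),\ldots,G_n(k)$ over the \emph{disjoint} windows $I_j=[e^{nt_{j-1}},e^{nt_j}]$.

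The target is dictated by the structure of $W(Z(\cdot))$. Since $W$ is a Brownian motion independent of the clock, conditionally on $Z$ the increments $\Delta W_j:=W(Z(t_j))-W(Z(t_{j-1}))$ are independent centered Gaussians with variances $\Delta Z_j:=Z(t_j)-Z(t_{j-1})$, so that
\[
\E\Big[\prod_j \Delta W_j^{\,m_j}\Big]=\Big(\prod_j \E[\eta^{m_j}]\Big)\,\E\Big[\prod_j (\Delta Z_j)^{m_j/2}\Big],
\]
which vanishes unless every $m_j$ is even and otherwise factorizes into Gaussian (Wick) constants times a joint moment of the clock increments. I must therefore show that $\E\big[\prod_j G_n(j)^{m_j}\big]$ converges to $C_{f,d}^{\,\sum_j m_j}$ times this quantity; the multinomial bookkeeping then produces the mixed-moment limit for the $F_n(t_j)$.

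The core computation is the asymptotics of
\[
\E\Big[\prod_j G_n(j)^{m_j}\Big]=\frac{1}{n^{M/2}}\int_{\prod_j I_j^{m_j}}\E\Big[\prod_{i=1}^M f(B^H(s_i))\Big]\prod_i ds_i,\qquad M=\sum_j m_j,
\]
which I would treat exactly as in the proof of Theorem~\ref{main}: insert $f(x)=\frac{1}{(2\pi)^d}\int\widehat f(\xi)e^{-i\xi\cdot x}\,d\xi$, use the Gaussianity of $B^H$ to write $\E[\prod_i e^{-i\xi_i\cdot B^H(s_i)}]$ as a Gaussian in the frequencies, order the time variables, pass to gap variables, and control the integral by the strong local nondeterminism of fBm together with the hypothesis $\int|f(x)||x|^\beta\,dx<\infty$. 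The new element is the bookkeeping over time-orderings that distribute the $M$ times among the $k$ windows. The leading contribution comes from orderings in which the times split into $M/2$ close pairs, each pair confined to a single window: each such pair contributes a factor $C_{f,d}^2$ (this is where the constant $\int|\widehat f|^2|x|^{-d}\,dx$ is generated) and forces every $m_j$ to be even, while the scale separation $e^{nt_{j-1}}\ll e^{nt_j}$ makes any pairing straddling two windows lose a power of $n$, hence negligible.

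The main obstacle is the last point, together with the identification of the clock. Although cross-window \emph{pairings} vanish, the limiting increments $\Delta W_j$ are not unconditionally independent: their even mixed moments are coupled through the joint law of the clock increments $\E[\prod_j(\Delta Z_j)^{m_j/2}]$, and this coupling originates not from microscopic pairing but from the macroscopic Kallianpur--Robbins scale. One must show that, after the $n^{-M/2}$ normalization and the within-window pairing, the integration over the macroscopic locations of the $M/2$ pairs reassembles, jointly over the windows, into exactly this clock moment --- i.e. that the occupation structure at the exponential scale $e^{nt}$ converges to the increments of $\ell(M^{-1}(\cdot))$, as established for the first-order clock in \cite{KaKos}. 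Making the microscopic pairing scale (controlled by strong local nondeterminism) and the macroscopic clock scale decouple cleanly and uniformly over all orderings and all multi-indices $(m_1,\ldots,m_k)$ is the crux; once the surviving orderings are summed, the Wick constants $\prod_j\E[\eta^{m_j}]$, the power $C_{f,d}^{\,\sum_j m_j}$, and the clock moment combine to match $C_{f,d}^{\,\sum_j m_j}\,\E[\prod_j\Delta W_j^{m_j}]$, and the method of moments yields the stated finite-dimensional convergence.
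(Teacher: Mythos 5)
Your overall strategy coincides with the paper's: method of moments, reduction to increments over the disjoint windows $[e^{nt_{j-1}},e^{nt_j}]$, vanishing of mixed moments when some exponent is odd (the paper's Proposition~\ref{odd}), and, for all-even exponents, a within-window pairing that produces one factor of $C_{f,d}^2$ per pair via local nondeterminism and the chaining estimates of Proposition~\ref{chain}. The moment-determinacy observation and the Wick factorization of $\E\big[\prod_j \Delta W_j^{m_j}\big]$ are also correct and match the paper.

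However, there is a genuine gap at exactly the point you flag as ``the crux'' and then do not resolve: the identification of the surviving macroscopic integral with the joint moments of the clock increments. After the pairing, both your argument and the paper's reduce the even mixed moment (up to matching upper and lower bounds) to
\[
\mathbf{m}!\,\lim_{n\to\infty} n^{-\frac{|\mathbf{m}|}{2}}\int_{O_{\mathbf{m}}}\prod_{i=1}^{N}\prod_{k=1}^{m_i/2}(u^i_{2k-1})^{-1}\,d\overline{u},
\]
and the entire difficulty is to prove that this deterministic limit equals $\E\big[\prod_{i=1}^N\big(W(\ell(M^{-1}(b_i)))-W(\ell(M^{-1}(a_i)))\big)^{m_i}\big]$. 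Your appeal to ``the first-order clock in \cite{KaKos}'' does not supply this: the first-order result there is a distributional limit for $\frac1n\int_0^{e^{nt}}f(B^H(s))\,ds$ with $\int f\neq 0$ and does not, as stated, give the joint moments of the increments of $\ell(M^{-1}(\cdot))$ in the combination required here. The paper closes this gap by a bootstrapping trick: it observes that its sandwich argument (inequalities (\ref{sup1}) and (\ref{low1})) becomes an equality when $H=\tfrac12$, $d=2$, so the unknown limit above is pinned down by the already-known second-order finite-dimensional convergence for planar Brownian motion in (\ref{kkk}), with the constants reconciled through Remark~\ref{norm1}. Without this step (or an independent computation of the clock moments), your argument identifies the correct structure of the limit but does not prove the stated formula.
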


\begin{remark} Since the process $\ell(M^{-1}(t))$ is not in $C((0,\infty])$, we could not use the Skorohod $J_1$-topology. For properties of the process $Z(t)=\ell(M^{-1}(t))$, we refer to \cite{Ka} and references therein. So far, we still have no idea to show the weakly $M_1$-convergence of the result in Theorem \ref{main0}, which was proved to be true for two-dimensional Brownian motion. 
\end{remark}  

\begin{remark} Since the function $f$ in the above two theorems is bounded, the constant $\beta$ in Theorems \ref{main} and \ref{main0} can always be assumed to be less than or equal to 1.
\end{remark}

As we all know, the fractional Brownian motion with Hurst index not equal to $\frac{1}{2}$ is neither a Markov process nor a semimartingale. Therefore, the methods once applied for two-dimensional Brownian motion and Markov processes can not be used here to prove Theorems \ref{main} and \ref{main0}. Moreover, when proving limit theorems for (additive) functionals of fractional Brownian motion, one often uses the method of moments. Another possible candidate is the Malliavin calculus. 

To show Theorems \ref{main} and \ref{main0}, we would use the method of moments plus some kind of chaining argument. The chaining argument was first developed in \cite{nx1} to prove the central limit theorem for an additive functional of the $d$-dimensional fractional Brownian motion with Hurst index $H\in(\frac{1}{d+2},\frac{1}{d})$. It has been proved to be very powerful when obtaining the asymptotic behavior of moments. However, the situation here is a little different from that in \cite{nx1}. We consider fractional Brownian motions in the critical case and use a different normalizing factor. To use the chaining argument in \cite{nx1}, some modifications and new ideas are needed. 

The main difficulty when applying the method of moments comes from the convergence of even moments. To overcome it, we first estimate the covariance between two increments of the fractional Brownian motion with Hurst index $H<\frac{1}{2}$ and then show that, under some conditions, these covariances do not contribute to the limit of even moments. See Lemma \ref{cov} and Step 3 in the proof of Proposition \ref{main1} for details. Roughly speaking, the short range dependence for the fractional Brownian motion with Hurst index $H<\frac{1}{2}$ and proper normalizing factor enable us to have some kind of weak independence. When showing Theorem \ref{main0}, we would use the above mentioned techniques and the known result for two-dimensional Brownian motion in (\ref{kkk}).

The paper is outlined in the following way. After some preliminaries in Section 2,  Sections 3 and 4 are devoted to the
proof of Theorems \ref{main} and \ref{main0}, respectively. Throughout this paper, if not mentioned otherwise, the letter $c$, 
with or without a subscript, denotes a generic positive finite
constant whose exact value is independent of $n$ and may change from
line to line. Moreover, we use $\iota$ to denote $\sqrt{-1}$ and $(x,y)$ (or $x\cdot y$) to denote the usual inner product in $\R^d$.

\bigskip

\section{Preliminaries}

Let $\{B^H(t)=(B^{H,1}(t),\cdots, B^{H,d}(t)),\; t\geq 0\}$ be a $d$-dimensional fractional Brownian motion (fBm) with Hurst index $H$ in $(0,1)$, defined on some probability space $(\Om, \cF, P)$. That is, the components
of $B^H$ are independent centered Gaussian processes with covariance function
\[
\E\big(B^{H,i}(t)B^{H,i}(s)\big)=\frac{1}{2}\big(t^{2H}+s^{2H}-|t-s|^{2H}\big).
\]  
The following lemma gives comparable upper and lower bounds for increments of the $d$-dimensional fractional Brownian motion on $n$ adjacent intervals. 
\begin{lemma} \label{lnd}
Given $n\ge 1$, there exist two positive constants $\kappa_{1}$ and $\kappa_{2}$ depending only on  $n$, $H$ and $d$, such that for any $0=s_0<s_1 <\cdots < s_n $ and $x_i \in \mathbb{R}^d$, $1\le i \le n$, we have
\[
\kappa_{1} \sum_{i=1}^n |x_i|^2 (s_i -s_{i-1})^{2H} \le \mathrm{Var} \Big( \sum_{i=1}^n x_i  \cdot \big(B^H(s_i) -B^H(s_{i-1})\big) \Big) \le \kappa_{2}  \sum_{i=1}^n |x_i|^2 (s_i -s_{i-1})^{2H}.
\]
\end{lemma}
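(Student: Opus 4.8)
The plan is to first reduce to the one-dimensional case, then treat the two inequalities separately: the upper bound by a direct Cauchy--Schwarz estimate, and the lower bound through the local nondeterminism of fractional Brownian motion, which I expect to be the genuine difficulty.

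First I would exploit the independence of the $d$ coordinate processes $B^{H,1},\dots,B^{H,d}$. Writing $x_i=(x_i^1,\dots,x_i^d)$ and $\Delta_i^{(l)}=B^{H,l}(s_i)-B^{H,l}(s_{i-1})$, the variance splits as $\mathrm{Var}\big(\sum_i x_i\cdot(B^H(s_i)-B^H(s_{i-1}))\big)=\sum_{l=1}^d\mathrm{Var}\big(\sum_i x_i^l\,\Delta_i^{(l)}\big)$, and since $\sum_l (x_i^l)^2=|x_i|^2$, it suffices to prove the two-sided bound for a single one-dimensional fBm, i.e. to find constants with $\kappa_1\sum_i a_i^2 u_i^{2H}\le \mathrm{Var}\big(\sum_i a_i\Delta_i\big)\le \kappa_2\sum_i a_i^2 u_i^{2H}$ for $a_i\in\mathbb R$, where $u_i=s_i-s_{i-1}$ and $\Delta_i$ now denotes a scalar increment. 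Since $\mathrm{Var}(\Delta_i)=u_i^{2H}$, writing $\Sigma$ for the covariance matrix of $(\Delta_1,\dots,\Delta_n)$ and $D=\mathrm{diag}(u_1^{2H},\dots,u_n^{2H})$, both inequalities amount to comparing the quadratic forms $a^T\Sigma a$ and $a^T D a$.

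For the upper bound I would use Cauchy--Schwarz twice. Since $|\mathrm{Cov}(\Delta_i,\Delta_j)|\le\sqrt{\mathrm{Var}(\Delta_i)\mathrm{Var}(\Delta_j)}=u_i^H u_j^H$, one gets $a^T\Sigma a\le\sum_{i,j}|a_i|u_i^H|a_j|u_j^H=\big(\sum_i|a_i|u_i^H\big)^2\le n\sum_i a_i^2 u_i^{2H}$, so $\kappa_2=n$ works; this bound is crude but sufficient, as the constants are allowed to depend on $n$.

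The lower bound is the crux and is precisely the local nondeterminism of fBm. The cleanest route I would take is the harmonizable representation $B^H(t)\overset{d}{=}C_H\int_{\mathbb R}(e^{\iota t\xi}-1)|\xi|^{-H-1/2}M(d\xi)$: writing $g=\sum_i a_i\mathbf 1_{[s_{i-1},s_i)}$ and $\hat g(\xi)=\int g(t)e^{\iota t\xi}\,dt$, the identity $\sum_i a_i(e^{\iota s_i\xi}-e^{\iota s_{i-1}\xi})=\iota\xi\,\hat g(\xi)$ turns the variance into $\mathrm{Var}\big(\sum_i a_i\Delta_i\big)=c_H\int_{\mathbb R}|\hat g(\xi)|^2|\xi|^{1-2H}\,d\xi$, and the task becomes the weighted Fourier lower bound $\int_{\mathbb R}|\hat g(\xi)|^2|\xi|^{1-2H}\,d\xi\ge c\sum_i a_i^2 u_i^{2H}$ for step functions. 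The hard part is that intervals of very different lengths feed overlapping frequency ranges, so one cannot simply isolate the band $|\xi|\sim u_i^{-1}$ for each $i$ independently; controlling these overlaps is exactly Pitt's strong local nondeterminism estimate, which I would invoke. Alternatively, because the constants may depend on $n$, I would run a self-similarity plus compactness argument: after normalizing $s_n=1$, the matrix $\tilde\Sigma=D^{-1/2}\Sigma D^{-1/2}$ depends only on the gap vector in the closed simplex, is continuous and positive definite in its interior, and the sole issue is degeneration as some gaps vanish; using that the normalized covariance $\mathrm{Cov}(\Delta_i,\Delta_j)/(u_i^H u_j^H)$ between a shrinking interval and a macroscopic one tends to $0$, one checks by induction on $n$ that the smallest eigenvalue of $\tilde\Sigma$ stays bounded away from $0$ up to the boundary. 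Either way, this lower bound is where essentially all the work lies; the reduction and the upper bound are routine.
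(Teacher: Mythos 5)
Your proposal is correct and follows essentially the same route as the paper: the upper bound is dispatched as elementary (the paper calls it ``obvious'', you make it explicit via Cauchy--Schwarz after reducing to one dimension by independence of coordinates), and the lower bound is reduced, exactly as in the paper, to the local nondeterminism property of fractional Brownian motion, which both you and the author invoke rather than reprove. Your sketches of how one might actually establish local nondeterminism (harmonizable representation, or a compactness argument) go beyond what the paper records but do not constitute a different method for the lemma itself.
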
 
\begin{proof} The second inequality is obvious. So it suffices to show the first one, which follows directly from the local nondeterminism property of the fractional Brownian motion. 
\end{proof}

The above inequalities in Lemma \ref{lnd} can be reformulated as
\begin{equation} \label{eq1}
\kappa_{1} \sum_{i=1}^n \Big|\sum_{j=i}^n x_j \Big|^2  (s_i -s_{i-1})^{2H} \le \mathrm{Var} \Big( \sum_{i=1}^n x_i  \cdot B^H_{s_i}  \Big) \le \kappa_{2}  \sum_{i=1}^n \Big|  \sum_{j=i}^n x_j \Big|^2(s_i -s_{i-1})^{2H}.
\end{equation}

The next lemma gives a formula for moments of $\sqrt{Z(t)}\, \eta$ where $Z(t)$ is an exponential random variable with parameter $t$ and $\eta$ is a standard normal random variable independent of $Z(t)$.
\begin{lemma} \label{m} For any $m\in\N$ and $t>0$, 
\[
\E[\sqrt{Z(t)}\,\eta]^{2m}=\frac{(2m)!\, t^m}{2^m}.
\]
\end{lemma}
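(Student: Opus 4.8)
The plan is to exploit the independence of $Z(t)$ and $\eta$ to factor the target moment into a product of a Gaussian moment and an exponential moment, each of which is classical, and then to multiply them.

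First I would use independence together with the fact that the exponent $2m$ is even to write
\[
\E[\sqrt{Z(t)}\,\eta]^{2m} = \E\big[Z(t)^m\,\eta^{2m}\big] = \E\big[Z(t)^m\big]\,\E\big[\eta^{2m}\big].
\]
For the Gaussian factor I would invoke the standard even-moment formula $\E[\eta^{2m}] = (2m-1)!! = \frac{(2m)!}{2^m\, m!}$, which one obtains either from the moment generating function $\E[e^{s\eta}] = e^{s^2/2}$ by extracting the coefficient of $s^{2m}$, or from the integration-by-parts recursion $\E[\eta^{2m}] = (2m-1)\,\E[\eta^{2m-2}]$.

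For the exponential factor I would compute $\E[Z(t)^m]$ directly against the density. Under the convention (pinned down by the placement of $t^m$ in the stated answer) that parameter $t$ means density $t^{-1}e^{-x/t}$ on $(0,\infty)$, i.e.\ mean $t$, the rescaling $x = t u$ reduces the integral to a Gamma integral,
\[
\E[Z(t)^m] = \int_0^\infty x^m\, t^{-1} e^{-x/t}\,dx = t^m \int_0^\infty u^m e^{-u}\,du = \Gamma(m+1)\, t^m = m!\, t^m.
\]

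Multiplying the two factors then yields
\[
\E[\sqrt{Z(t)}\,\eta]^{2m} = m!\, t^m \cdot \frac{(2m)!}{2^m\, m!} = \frac{(2m)!\, t^m}{2^m},
\]
as claimed. There is essentially no obstacle here, since both ingredients are textbook moment computations; the only point meriting care is fixing the normalization convention for the exponential parameter, because the rate-$t$ reading (density $t e^{-tx}$, mean $1/t$) would instead place $t^m$ in the denominator rather than the numerator.
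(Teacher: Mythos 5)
Your proposal is correct and follows essentially the same route as the paper: factor by independence, use $\E[\eta^{2m}]=(2m-1)!!$, and combine with $\E[Z(t)^m]=m!\,t^m$ (the paper obtains the latter from the moment generating function rather than by direct integration, and likewise reads ``parameter $t$'' as mean $t$). No gaps.
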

\begin{proof} Using the moment generating function of the exponential distribution, we can easily obtain $\E[Z(t)]^m=m!\, t^m$.  Since $\eta$ and $Z(t)$ are independent, 
\[
\E[\sqrt{Z(t)}\,\eta]^{2m}=\E[Z(t)]^m\E[\eta]^{2m}=m!\, t^m\, (2m-1)!!=\frac{(2m)!\, t^m}{2^m}.
\]
\end{proof}
\begin{remark} The above result says that $\sqrt{Z(t)}\,\eta$ has Laplace distribution.
\end{remark}

We shall also need the following lemma which will play a very important role in proving the convergence of finite dimensional distributions. 
\begin{lemma} \label{cov} For any $H<\frac{1}{2}$ and $0<t_1<t_2<t_3<t_4<\infty$, we have
\[
\big|\E\big(B^{H,1}(t_4)-B^{H,1}(t_3)\big)\big(B^{H,1}(t_2)-B^{H,1}(t_1)\big)\big|
\]
\begin{itemize}
\item[(i)]
is less than 
\[
2H\Big(\frac{\Delta t_2}{\Delta t_3}\Big)^{\frac{1}{2}-H}\Big(\frac{\Delta t_4}{\Delta t_3}\Big)^{\frac{1}{2}-H}(\Delta t_2)^H(\Delta t_4)^H,
\]
where $\Delta t_i=t_i-t_{i-1}$ for $i=2,3,4$;
\item[(ii)] is less than
\[
2\Big(\frac{\Delta t_2\wedge \Delta t_4}{\Delta t_2\vee \Delta t_4}\Big)^{H}(\Delta t_2)^H(\Delta t_4)^H.
\]
\end{itemize}
\end{lemma}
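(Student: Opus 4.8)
The plan is to reduce both bounds to the exact cross-covariance of the two increments and then estimate it. Writing $X=B^{H,1}(t_2)-B^{H,1}(t_1)$ and $Y=B^{H,1}(t_4)-B^{H,1}(t_3)$, I would first expand $\E(XY)$ through the covariance function; the four terms of the form $t_i^{2H}$ cancel, leaving the second-difference identity
\[
2\,\E(XY)=(t_4-t_1)^{2H}-(t_4-t_2)^{2H}-(t_3-t_1)^{2H}+(t_3-t_2)^{2H}.
\]
Because the intervals $[t_1,t_2]$ and $[t_3,t_4]$ are disjoint, this equals the convergent integral obtained by integrating the mixed second derivative of the covariance over the rectangle $[t_1,t_2]\times[t_3,t_4]$:
\[
\E(XY)=H(2H-1)\int_{t_3}^{t_4}\int_{t_1}^{t_2}(t-s)^{2H-2}\,ds\,dt,
\]
which is legitimate since $t-s\ge \Delta t_3>0$ there, so the integrand never meets its singularity. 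For $H<\frac12$ the factor $H(2H-1)$ is negative and the integrand positive, so $\big|\E(XY)\big|=H(1-2H)\int_{t_3}^{t_4}\int_{t_1}^{t_2}(t-s)^{2H-2}\,ds\,dt$. The change of variables $u=t-t_3\in[0,\Delta t_4]$, $v=t_2-s\in[0,\Delta t_2]$ turns $t-s$ into $\Delta t_3+u+v$, and both parts of the lemma then become estimates for $\int_0^{\Delta t_4}\int_0^{\Delta t_2}(\Delta t_3+u+v)^{2H-2}\,dv\,du$.

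For part (i) I would split the exponent as $2H-2=(2H-1)+(-1)$ and use $\Delta t_3+u+v\ge\Delta t_3$ on the factor with negative exponent $2H-1$ and $\Delta t_3+u+v\ge u+v$ on the factor with exponent $-1$, so that
\[
(\Delta t_3+u+v)^{2H-2}\le(\Delta t_3)^{2H-1}(u+v)^{-1}\le\tfrac12(\Delta t_3)^{2H-1}(uv)^{-1/2},
\]
the last step by AM--GM, $u+v\ge 2\sqrt{uv}$. Integrating the separated factors $u^{-1/2},v^{-1/2}$ gives
\[
\big|\E(XY)\big|\le 2H(1-2H)\,(\Delta t_3)^{2H-1}(\Delta t_2)^{1/2}(\Delta t_4)^{1/2}\le 2H\,(\Delta t_3)^{2H-1}(\Delta t_2)^{1/2}(\Delta t_4)^{1/2}.
\]
A direct check shows $(\Delta t_3)^{2H-1}(\Delta t_2)^{1/2}(\Delta t_4)^{1/2}$ equals $(\Delta t_2/\Delta t_3)^{1/2-H}(\Delta t_4/\Delta t_3)^{1/2-H}(\Delta t_2)^H(\Delta t_4)^H$, which is exactly the bound in (i).

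For part (ii) I would instead evaluate the double integral one variable at a time; integrating first in $v$ and then in $u$ collapses it back to powers and gives
\[
2\big|\E(XY)\big|=\big[(\Delta t_3+\Delta t_4)^{2H}-(\Delta t_3)^{2H}\big]-\big[(\Delta t_2+\Delta t_3+\Delta t_4)^{2H}-(\Delta t_2+\Delta t_3)^{2H}\big].
\]
Setting $g(x)=x^{2H}$ and $\psi(x)=g(x+\Delta t_4)-g(x)$, the right-hand side is $\psi(\Delta t_3)-\psi(\Delta t_2+\Delta t_3)$; since $g'$ is decreasing, $\psi$ decreases, and bounding $g'(x)-g'(x+\Delta t_4)\le g'(x)$ yields $2|\E(XY)|\le g(\Delta t_2+\Delta t_3)-g(\Delta t_3)\le(\Delta t_2)^{2H}$ by the subadditivity of $x\mapsto x^{2H}$. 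As the displayed expression is symmetric under $\Delta t_2\leftrightarrow\Delta t_4$, the same computation gives $2|\E(XY)|\le(\Delta t_4)^{2H}$, hence $|\E(XY)|\le\tfrac12(\Delta t_2\wedge\Delta t_4)^{2H}$; rewriting $(\Delta t_2\wedge\Delta t_4)^{2H}$ as $\big(\tfrac{\Delta t_2\wedge\Delta t_4}{\Delta t_2\vee\Delta t_4}\big)^H(\Delta t_2)^H(\Delta t_4)^H$ produces (ii), in fact with the constant $\tfrac12$ in place of $2$.

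All computations are elementary; genuine care is needed in only two places. First, one must justify the integral representation, namely that the mixed second derivative of the covariance can be integrated over the rectangle, which rests on the intervals being separated so the integrand stays bounded. Second, the main obstacle is arranging the estimate for (i) so it lands on exactly the stated product of ratios: the key point is that the symmetric AM--GM split $(u+v)^{-1}\le\tfrac12(uv)^{-1/2}$ is precisely what produces the balanced factors $(\Delta t_2)^{1/2}(\Delta t_4)^{1/2}$, after which the leftover power $(\Delta t_3)^{2H-1}$ recombines into the two ratio factors $(\Delta t_2/\Delta t_3)^{1/2-H}$ and $(\Delta t_4/\Delta t_3)^{1/2-H}$. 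The hypothesis $H<\frac12$ enters essentially throughout, fixing the sign of the covariance and the direction of every monotonicity and concavity estimate.
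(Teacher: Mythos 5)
Your argument is correct, and it takes a genuinely different route from the paper's. For part (i) the paper works directly with the closed-form second difference: it factors out $(\Delta t_3)^{2H}$, writes the covariance as $(\Delta t_3)^{2H}\bigl[(1+u)^{2H}+(1+v)^{2H}-(1+u+v)^{2H}-1\bigr]$ with $u=\Delta t_2/\Delta t_3$, $v=\Delta t_4/\Delta t_3$, shows via Taylor expansion (i.e.\ a derivative bound using $2H-1<0$) that the bracket is at most $2Hu$ and at most $2Hv$, and then takes the geometric mean of these two bounds to get $2H(\Delta t_3)^{2H}\sqrt{uv}$. You instead pass to the integral representation $H(2H-1)\int\!\!\int (t-s)^{2H-2}$ over the rectangle, split the exponent as $(2H-1)+(-1)$, and apply AM--GM to $(u+v)^{-1}$; the two symmetric square roots play the same role there as the geometric mean of the two one-sided Taylor bounds does in the paper, and you even retain the extra factor $(1-2H)$ that the paper discards. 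For part (ii) the paper gives no proof at all, simply citing K\^ono; your monotonicity argument for $\psi(x)=g(x+\Delta t_4)-g(x)$ combined with subadditivity of $x\mapsto x^{2H}$ supplies a clean self-contained proof, and in fact yields the constant $\tfrac12$ rather than $2$. The only point to be careful about, which you already flag, is the justification of integrating the mixed second derivative over the rectangle; since $t-s\ge\Delta t_3>0$ there, this is routine. Both approaches are elementary and of comparable length; yours has the advantage of unifying (i) and (ii) under one representation and of making the lemma independent of the external reference.
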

\begin{proof} Part (ii) was pointed out by  K\^ono in \cite{Kon}. So it suffices to show part (i). For simplicity of notation, we write $a$ for 
\[
\E\big(B^{H,1}(t_4)-B^{H,1}(t_3)\big)\big(B^{H,1}(t_2)-B^{H,1}(t_1)\big).
\]
Since $H<\frac{1}{2}$, it is easy to see
\begin{align*}
|a|
&=(\Delta t_4+\Delta t_3)^{2H}+(\Delta t_3+\Delta t_2)^{2H}-(\Delta t_4+\Delta t_3+\Delta t_2)^{2H}-(\Delta t_3)^{2H}\\
&=(\Delta t_3)^{2H}\big[(1+u)^{2H}+(1+v)^{2H}-(1+u+v)^{2H}-1\big],
\end{align*}
where $u=\frac{\Delta t_2}{\Delta t_3}$ and $v=\frac{\Delta t_4}{\Delta t_3}$.

Using Taylor's expansion and the fact $H<\frac{1}{2}$,  we can show that 
\[
\big[(1+u)^{2H}+(1+v)^{2H}-(1+u+v)^{2H}-1\big]
\]
is less than $2Hu$ and $2Hv$. Therefore,
\begin{align*}
|a|\leq 2H (\Delta t_3)^{2H}\sqrt{uv}=2H (uv)^{\frac{1}{2}-H}(\Delta t_2)^H(\Delta t_4)^H.
\end{align*}
This completes the proof.
\end{proof}

\bigskip

\section{Proof of Theorem \ref{main}}
In this section, we shall show Theorem \ref{main}. Since $f$ is bounded,  we only need to consider the convergence of the following random variables
\[
F_n=\frac{1}{\sqrt{n}}\int^{e^{nt}}_1 f(B^H(s))\, ds.
\]

For any $m\in\N$, let
\[
I^n_m= \frac{m!}{n^{\frac{m}{2}}}\, \E\Big[\int_{D_{m,1}} \Big(\prod^m_{i=1}f(B^H(s_i))\Big)\, ds\Big],
\]
where $D_{m,1}=\big\{(s_1, \dots, s_m) \in D_m: s_i-s_{i-1}\geq n^{-m},\, i=2,3,\dots, m\big\}$ and $D_m=\big\{1<s_1<\cdots<s_m<e^{nt} \big\}$.
Then,   taking into account that $f$ is bounded, we can obtain
\begin{align*}
\Big|\E(F_n)^m-I^n_m\Big|  & \le \frac{m!}{n^{\frac{m}{2}}}\, \sum_{j=1}^m \E\Big[\int_{D_{m}\cap\{|s_j -s_{j-1}| <n^{-m}\}} \Big(\prod^m_{i=1}|f(B^H(s_i))|\Big)\, ds\Big]\\
& \le \|f\|_\infty\, \frac{m m!}{n^{\frac{3m}{2}}}\,  \E\Big[\int_{D_{m-1}} \Big(\prod^{m-1}_{i=1}|f(B^H(s_i))|\Big)\, ds\Big].
\end{align*}
Thus,  using the argument on page 166 of \cite{KaKos}, 
\begin{equation} \label{eq1}
  \Big|\E(F_n)^m-I^n_m\Big|\le c_1 n^{-\frac m2 -1}.
\end{equation}
Applying Fourier transform, we can write
\begin{align*}
I^n_m
&=\frac{m!}{((2\pi)^d\sqrt{n})^m}\int_{\R^{md}}\int_{D_{m,1}} \Big(\prod^m_{i=1}\widehat{f}(x_i)\Big)\,\exp\Big(-\frac{1}{2}\Var\big(\sum\limits^m_{i=1} x_i\cdot B^H(s_i)\big)\Big)\, ds\, dx.
\end{align*}
Making the change of variables $y_i=\sum\limits^m_{j=i}x_j$ for $i=1,2,\dots, m$ gives
\begin{align*}
I^n_m
&=\frac{m!}{((2\pi)^d\sqrt{n})^m}\int_{\R^{md}}\int_{D_{m,1}} \Big(\prod^m_{i=1}\widehat{f}(y_i-y_{i+1})\Big)\\
&\qquad\qquad\qquad\times \exp\bigg(-\frac{1}{2}\Var\Big(\sum\limits^m_{i=1} y_i\cdot \big(B^H(s_i)-B^H(s_{i-1})\big)\Big)\bigg)\, ds\, dy.
\end{align*}

Set $I^n_{m,0}=I^n_m$. For $k=1,\dots,m$, we define
\begin{align*}
I^n_{m,k}
&=\frac{m!}{((2\pi)^d\sqrt{n})^m}\int_{\R^m}\int_{D_{m,1}}  I_k\,  \prod^{m}_{i=k+1} \widehat{f}(y_i-y_{i+1})\\
&\qquad\qquad\times\exp\bigg(-\frac{1}{2}\Var\Big(\sum\limits^m_{i=1} y_i\cdot \big(B^H(s_i)-B^H(s_{i-1})\big)\Big)\bigg)\, ds\, dy,
\end{align*}
where 
\[
I_k =
\begin{cases}
\prod\limits^{\frac{k-1}{2}}_{j=1}|\widehat{f}(y_{2j})|^2 \widehat{f}(-y_{k+1}), & \text{if } k \text{ is odd}; \\
\prod\limits^{\frac{k}{2}}_{j=1}|\widehat{f}(y_{2j})|^2, & \text{if }k\text{ is even}.
\end{cases}
\]

The following proposition, which is similar to Proposition 4.1 in \cite{nx1}, controls the difference between $I^n_{m,k-1}$ and $ I^n_{m,k}$. We fix a positive constant $\lambda$ strictly less than $\frac{1}{2}$.

\begin{proposition} \label{chain} 
For $k=1,2,\dots,m$, there exists a positive constant $c$, which depends only on $\lambda$, such that
\[
|I^n_{m,k-1}-I^n_{m,k}|\leq c\, n^{-\lambda}.
\]
\end{proposition}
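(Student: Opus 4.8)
The plan is to exploit that the integrands of $I^n_{m,k-1}$ and $I^n_{m,k}$ coincide except in a single Fourier factor, and that the resulting difference carries one extra power of decay coming from the vanishing of $\widehat f$ at the origin. Comparing the two definitions, for odd $k$ the integrands agree except that $\widehat f(y_k-y_{k+1})$ in $I^n_{m,k-1}$ is replaced by $\widehat f(-y_{k+1})$ in $I^n_{m,k}$; for even $k$, writing $|\widehat f(y_k)|^2=\widehat f(y_k)\widehat f(-y_k)$ (legitimate since $f$ is real), they agree except that $\widehat f(y_k-y_{k+1})$ is replaced by $\widehat f(y_k)$. In both cases the difference of the integrands is a common product of $\widehat f$-factors and the Gaussian, times a single difference $\widehat f(y_k-y_{k+1})-\widehat f(\cdot)$ in which exactly one of $y_k,y_{k+1}$ has been set to zero.

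Next I would quantify this difference. Since $\int_{\R^d}f(x)\,dx=0$ we have $\widehat f(0)=0$, and since $\int_{\R^d}|f(x)||x|^\beta\,dx<\infty$, the elementary bound $|e^{\iota\theta}-1|\le 2^{1-\beta}|\theta|^\beta$ makes $\widehat f$ $\beta$-Hölder continuous; together with $\|\widehat f\|_\infty\le\|f\|_{L^1}<\infty$ this yields $|\widehat f(y_k-y_{k+1})-\widehat f(-y_{k+1})|\le c\min(|y_k|^\beta,1)$ for odd $k$ and $|\widehat f(y_k-y_{k+1})-\widehat f(y_k)|\le c\min(|y_{k+1}|^\beta,1)$ for even $k$. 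Thus $|I^n_{m,k-1}-I^n_{m,k}|$ is dominated by the same type of integral with the small factor $\min(|y_\ell|^\beta,1)$, $\ell\in\{k,k+1\}$, inserted. I would then invoke the lower bound of Lemma \ref{lnd} to replace $\exp(-\frac12\Var(\cdots))$ by the factorized Gaussian $\prod_i\exp(-\frac{\kappa_1}{2}|y_i|^2(s_i-s_{i-1})^{2H})$ and integrate out the $y_i$. Because $Hd=1$, a bare Gaussian integral produces $(s_i-s_{i-1})^{-1}$, whereas the inserted factor upgrades the $y_\ell$-integral to $c(s_\ell-s_{\ell-1})^{-1}\min((s_\ell-s_{\ell-1})^{-H\beta},1)$, an extra decaying factor.

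Finally I would carry out the $s$-integration over $D_{m,1}$. With $u_i=s_i-s_{i-1}$ the region is $u_1>1$, $u_i\ge n^{-m}$ for $i\ge2$, and $\sum_i u_i<e^{nt}$; an ordinary factor gives $\int u_i^{-1}\,du_i=O(nt)=O(n)$, whereas the extra gain turns the relevant integral into $\int u_\ell^{-1}\min(u_\ell^{-H\beta},1)\,du_\ell=O(\log n)$ (and $O(1)$ when $\ell=1$). Relative to the prefactor $n^{-m/2}$, this single extra gain lowers the power of $n$ by essentially one, so that after absorbing logarithms one reaches $|I^n_{m,k-1}-I^n_{m,k}|\le c\,n^{-\lambda}$ for the stated $\lambda<\frac12$.

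The hard part is the gain bookkeeping hidden in the last two paragraphs. Bounding the surviving factors $\widehat f$ crudely by constants would leave an $s$-integral of order $n^m$, and after the prefactor $n^{-m/2}$ and one gain this is far too large for $m\ge2$; hence the factors in $I_{k-1}$ and, more seriously, the chained product $\prod_{i>k}\widehat f(y_i-y_{i+1})$ must also be made to contribute their $\widehat f(0)=0$ gains. The chained product does not factorize, since each interior $y_i$ occurs in two consecutive factors, so one must use $|\widehat f(y_i-y_{i+1})|\le c\min((|y_i|+|y_{i+1}|)^\beta,1)$ together with a combinatorial splitting of the product to extract a gain from enough of the $y_i$, exactly as in Proposition 4.1 of \cite{nx1}, now adapted to the critical regime $Hd=1$, the normalization by $e^{nt}$, and the lower cutoff $n^{-m}$. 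Tracking these gains is what produces the constant $c$ and the threshold $\lambda<\frac12$.
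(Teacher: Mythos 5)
Your proposal follows essentially the same route as the paper's proof: bound the single differing Fourier factor by the H\"older continuity of $\widehat f$ coming from $\widehat f(0)=0$ and $\int|f(x)||x|^{\beta}dx<\infty$, factorize the Gaussian via Lemma \ref{lnd}, extract the additional necessary gains from the chained product $\prod_{i>k}\widehat f(y_i-y_{i+1})$ by the pairing/splitting $|\widehat f(y_i-y_{i+1})|\le c(|y_i|^{\alpha}+|y_{i+1}|^{\alpha})$ as in \cite{nx1}, and then count powers of $n$. The only cosmetic difference is that the paper uses a tunable exponent $\alpha\in(0,\beta]$ chosen small enough that the final exponent is exactly $-\lambda$, whereas you fix the exponent $\beta$ but retain the $\wedge\,1$ truncation so that each gained variable contributes $O(\log n)$ rather than $O(n^{mH\alpha})$; both bookkeepings land at $c\,n^{-\lambda}$ for any $\lambda<\frac12$.
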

 
\begin{proof}  We first consider the case when $k$ is odd. Making the change of variables $u_1=s_1$, $u_i =s_i -s_{i-1}$, for $2\le i\le m$ and then applying Lemma \ref{lnd}, we can show that $|I^n_{m,k-1}-I^n_{m,k}|$ is less than a constant multiple of
\begin{align*}
&n^{-\frac{m}{2}}\int_{\R^{md}}\int_{ [n^{-m}, e^{nt}]^m} \Big(\prod^m_{i=k+1}  |\widehat{f}(y_i-y_{i+1})|\Big)\, \big|\widehat{f}(y_k-y_{k+1})-\widehat{f}(-y_{k+1})\big| \\
&\qquad\qquad\qquad \times \Big(\prod^{\frac{k-1}{2}}_{j=1}|\widehat{f}(y_{2j})|^2\Big)\,\exp\Big(-\frac{\kappa_1}{2}\sum\limits^m_{i=1} |y_i|^2\, u^{2H}_i\Big)\, du\, dy
\end{align*}
with the convention $y_{m+1}=0$.

Taking into account that $|\widehat{f}(x)|\leq c_{\alpha}(|x|^{\alpha}\wedge 1)$ for any $\alpha\in[0,\beta]$,
$|I^n_{m,k-1}-I^n_{m,k}|$ is less than a constant multiple of
\[
n^{-\frac{m}{2}}\int_{\R^{md}}\int_{ [n^{-m}, e^{nt}]^m}  |y_{k}|^{\alpha} \prod^{\lfloor\frac{m}{2}\rfloor}_{j=\frac{k+1}{2}}  (|y_{2j}|^{\alpha}+|y_{2j+1}|^{\alpha}) \Big(\prod^{\frac{k-1}{2}}_{j=1}|\widehat{f}(y_{2j})|^2\Big) \exp\Big(-\frac{\kappa_1}{2}\sum\limits^m_{i=1} |y_i|^2\, u^{2H}_i\Big)\, du\, dy.
\]
Integrating with respect to $y_i$s and $u_i$s with $i\leq k-1$ gives
\begin{align*}
|I^n_{m,k-1}-I^n_{m,k}|
&\leq c_1\, n^{-\frac{m-(k-1)}{2}}\int_{\R^{(m-k+1)d}}\int_{[n^{-m}, e^{nt}]^{m-k+1}}  |y_k|^{\alpha}\prod^{\lfloor\frac{m}{2}\rfloor}_{j=\frac{k+1}{2}}  (|y_{2j}|^{\alpha}+|y_{2j+1}|^{\alpha})\\
&\qquad\qquad\times \exp\Big(-\frac{\kappa_1}{2}\sum\limits^m_{i=k} |y_i|^2\, u^{2H}_i\Big)\, d\overline{u}\, d\overline{y},
\end{align*}
where $d\overline{u}=du_k\cdots du_{m}$ and $d\overline{y}=dy_k\cdots dy_{m}$. After doing some calculus,
\begin{align*}
|I^n_{m,k-1}-I^n_{m,k}|
&\leq c_2\, n^{-\frac{m-k+1}{2}+(\lfloor\frac{m-k+1}{2}\rfloor+1)(mH\alpha)+(m-k-\lfloor\frac{m-k+1}{2}\rfloor)}\\
&= c_2\, n^{\frac{m}{2}-\lfloor\frac{m}{2}\rfloor-1+(\lfloor\frac{m-k+1}{2}\rfloor+1)(mH\alpha)}.
\end{align*}
Choosing $\alpha$ small enough such that 
\[
\frac{m}{2}-\lfloor\frac{m}{2}\rfloor-1+(\lfloor\frac{m-k+1}{2}\rfloor+1)(mH\alpha)=-\lambda
\] 
gives
\begin{equation} \label{chaino}
|I^n_{m,k-1}-I^n_{m,k}|\leq c_2\, n^{-\lambda}.
\end{equation}

We next consider the case when $k$ is even. By Lemma \ref{lnd}, $|I^n_{m,k-1}-I^n_{m,k}|$ is less than a constant multiple of
\begin{align*}
&n^{-\frac{m}{2}}\int_{\R^{md}}\int_{[n^{-m}, e^{nt}]^m} \big|\widehat{f}(-y_k)\big|\big|\widehat{f}(y_k-y_{k+1})-\widehat{f}(y_k)\big|\Big(\prod^m_{i=k+1}  |\widehat{f}(y_i-y_{i+1})|\Big) \\
&\qquad\qquad\qquad \times \Big(\prod^{\frac{k-2}{2}}_{j=1}|\widehat{f}(y_{2j})|^2\Big)\,\exp\Big(-\frac{\kappa_1}{2}\sum\limits^m_{i=1} |y_i|^2\, u^{2H}_i\Big)\, du\, dy.
\end{align*}
Using similar arguments as in the odd case,
\begin{align*}
&|I^n_{m,k-1}-I^n_{m,k}|\\
&\leq c_3\, n^{-\frac{m}{2}}\int_{\R^{md}}\int_{[n^{-m}, e^{nt}]^m} |y_k|^{\alpha}|y_{k+1}|^{\alpha} \prod^{\lfloor\frac{m}{2}\rfloor}_{j=\frac{k+2}{2}}  (|y_{2j}|^{\alpha}+|y_{2j+1}|^{\alpha}) \\
&\qquad\qquad\qquad \times \Big(\prod^{\frac{k-2}{2}}_{j=1}|\widehat{f}(y_{2j})|^2\Big)\,\exp\Big(-\frac{\kappa_1}{2}\sum\limits^m_{i=1} |y_i|^2\, u^{2H}_i\Big)\, du\, dy\\
&\leq c_4\, n^{-\frac{m-(k-2)}{2}+(\lfloor\frac{m-k}{2}\rfloor+2)(mH\alpha)+(m-k-\lfloor\frac{m-k}{2}\rfloor)}\\
&= c_4\, n^{\frac{m}{2}-\lfloor\frac{m}{2}\rfloor-1+(\lfloor\frac{m-k}{2}\rfloor+2)(mH\alpha)}.
\end{align*}
Choosing $\alpha$ small enough such that 
\[
\frac{m}{2}-\lfloor\frac{m}{2}\rfloor-1+(\lfloor\frac{m-k}{2}\rfloor+2)(mH\alpha)=-\lambda
\] 
gives
\begin{equation} \label{chaine}
|I^n_{m,k-1}-I^n_{m,k}|\leq c_4\, n^{-\lambda}.
\end{equation}

Combining (\ref{chaino}) and (\ref{chaine}) gives the desired estimates.
\end{proof}
\bigskip

In the sequel, we will use estimates in Proposition \ref{chain} to show Theorem \ref{main}.
\begin{proposition} \label{main1} Suppose that $f$ is bounded, $\int_{\R^d} f(x)\, dx=0$ and $\int_{\R^d} |f(x)||x|^{\beta}\, dx<\infty$ for some positive constant $\beta>0$. Then, for any $t>0$,
\[
\frac{1}{\sqrt{n}}\int^{e^{nt}}_1 f(B^H(s)) ds \overset{\mathcal{L}}{\longrightarrow } C_{f,d}\, \sqrt{Z(t)}\, \eta
\]
as $n$ tends to infinity, where 
\[
C_{f,d}=\bigg(\frac{d\, \Gamma(\frac{d}{2})}{\pi^{\frac{d}{2}}(2\pi)^{d}} \int_{\R^d} |\widehat{f}(x)|^2 |x|^{-d}\, dx \bigg)^{\frac{1}{2}},
\]
$\widehat{f}$ is the Fourier transform of $f$, $Z(t)$ is an exponential random variable with parameter $t$ and $\eta$ is a standard normal random variable independent of $Z(t)$.
\end{proposition}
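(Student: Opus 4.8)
The plan is to apply the method of moments. Since the target law $C_{f,d}\sqrt{Z(t)}\,\eta$ is a scaled Laplace distribution, whose moments grow slowly enough to be moment-determinate (Lemma \ref{m} gives the explicit even moments and vanishing odd moments), it suffices to prove that $\E(F_n)^m$ converges to $\E[C_{f,d}\sqrt{Z(t)}\,\eta]^m$ for every $m\in\N$. First I would use the truncation estimate $|\E(F_n)^m-I^n_m|\le c_1 n^{-m/2-1}$ to replace $\E(F_n)^m$ by $I^n_m=I^n_{m,0}$. Then, telescoping the chain $I^n_{m,0},I^n_{m,1},\dots,I^n_{m,m}$ and summing the $m$ bounds from Proposition \ref{chain}, I get $|I^n_{m,0}-I^n_{m,m}|\le c\,m\,n^{-\lambda}\to 0$. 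Hence the whole problem reduces to identifying $\lim_{n\to\infty}I^n_{m,m}$.

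For odd $m$ the factor $I_m$ contains $\widehat{f}(-y_{m+1})=\widehat{f}(0)=\int_{\R^d}f=0$, so $I^n_{m,m}=0$, matching the vanishing odd moments. For even $m=2p$ the surviving factor is $I_m=\prod_{j=1}^{p}|\widehat{f}(y_{2j})|^2$, which depends only on the even-indexed variables, while the odd-indexed $y_i$ enter solely through the Gaussian density. The key analytic step is to show that in the limit the exact quadratic form $\Var\big(\sum_i y_i\cdot(B^H(s_i)-B^H(s_{i-1}))\big)$ may be replaced by its diagonal part $\sum_i|y_i|^2 u_i^{2H}$, with $u_i=s_i-s_{i-1}$; note the diagonal is exact since each increment has variance $u_i^{2H}$ per component. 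This is where Lemma \ref{cov} enters: writing $\exp(-\tfrac12\Var)=\exp(-\tfrac12\sum_i|y_i|^2u_i^{2H})\cdot\exp(-\tfrac12(\text{cross terms}))$ and using $|e^{-a}-e^{-b}|\le|a-b|$, the error is controlled by the off-diagonal covariances, which by Lemma \ref{cov} carry small prefactors of ratio type $(\Delta t\text{-ratios})^{1/2-H}$. Because $H<\tfrac12$, these quantify the short range dependence and, after integration over the cutoff region $u_i\ge n^{-m}$ against the Gaussian weight, contribute negligibly relative to the diagonal contribution. I expect this covariance-removal argument (the ``Step 3'' flagged in the introduction) to be the principal obstacle, since it must be done with enough precision to yield the \emph{exact} constant.

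Once the diagonal replacement is justified the integrand factorizes over $i$. Integrating each odd-indexed $y_i$ gives $\int_{\R^d}e^{-\frac12|y|^2u_i^{2H}}dy=(2\pi)^{d/2}u_i^{-Hd}=(2\pi)^{d/2}u_i^{-1}$, using the critical relation $Hd=1$, while integrating each even-indexed $y_{2j}$ gives $g(u_{2j}):=\int_{\R^d}|\widehat{f}(y)|^2 e^{-\frac12|y|^2 u_{2j}^{2H}}\,dy$. The function $g$ is integrable on $(0,\infty)$: finite at $0$ by Plancherel and decaying like $u^{-1-2H\alpha}$ at infinity because $|\widehat{f}(y)|\le c\,(|y|^{\alpha}\wedge 1)$ from the moment condition; evaluating $\int_0^\infty e^{-\frac12|y|^2u^{2H}}\,du=\frac{d}{2}\Gamma(\tfrac d2)2^{d/2}|y|^{-d}$ yields $\int_0^\infty g(u)\,du=\frac{d}{2}\Gamma(\tfrac d2)2^{d/2}\int_{\R^d}|\widehat{f}(y)|^2|y|^{-d}\,dy$, a finite constant $G$ in which $C_{f,d}^2$ is recognizable. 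Thus each of the $p$ even increments contributes $G$, and each of the $p$ odd increments contributes $(2\pi)^{d/2}u_i^{-1}$, whose integral over the simplex $D_{m,1}$ produces a logarithmic factor $\log(e^{nt})\sim nt$ per odd index (the even increments concentrate near $0$, so they do not tie up the budget $e^{nt}$). The $n^{-m/2}=n^{-p}$ normalization then cancels the $(nt)^p$ from the $p$ logarithmic factors, all powers of $2\pi$ cancel, and one is left with $I^n_{2p,2p}\to C_{f,d}^{2p}\,\frac{(2p)!\,t^p}{2^p}=\E[C_{f,d}\sqrt{Z(t)}\,\eta]^{2p}$ by Lemma \ref{m}. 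Matching all moments then completes the proof.
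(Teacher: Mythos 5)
Your overall architecture --- method of moments, the truncation estimate, telescoping Proposition \ref{chain} to reduce everything to $I^n_{m,m}$, the vanishing of the odd moments, and the final bookkeeping of constants (which you carry out correctly, including $\int_0^\infty e^{-\frac12|y|^2u^{2H}}\,du=d\,2^{\frac d2-1}\Gamma(\tfrac d2)|y|^{-d}$ and the cancellation of the powers of $2\pi$) --- coincides with the paper's. The gap sits exactly where you flag the principal obstacle: the replacement of $\Var\big(\sum_i y_i\cdot(B^H(s_i)-B^H(s_{i-1}))\big)$ by its diagonal. The bound $|e^{-a}-e^{-b}|\le|a-b|$ combined with Lemma \ref{cov} does not work on the whole region $\{u_i\ge n^{-m}\}$, because Lemma \ref{cov} only produces a small prefactor when the two increments either live on intervals of very different lengths (part (ii)) or are separated by a gap much longer than both (part (i)). For adjacent increments of comparable length the covariance is of the \emph{same} order as the diagonal (e.g.\ $u_i=u_{i+1}=u$ gives $|a_{i,i+1}|=(1-2^{2H-1})u^{2H}$), and for two huge odd-indexed gaps of comparable size separated only by a moderate even gap neither part of Lemma \ref{cov} gives any smallness at all. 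So the pointwise error is not negligible, and the assertion that it "contributes negligibly after integration" is precisely the thing that has to be proved.

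The paper supplies the missing ingredient by localizing the integral \emph{before} invoking Lemma \ref{cov}: using the two-sided bound of Lemma \ref{lnd} and the logarithmic bookkeeping, it shows that the $\limsup$ is unchanged upon restricting to $\widetilde O_m$ (odd gaps $>n^2$, even gaps in $(n^{-1},n)$) and further to $\widetilde O_{m,\gamma}$, where every pair of odd gaps has ratio at least $\gamma$, at a cost $O(\log\gamma/n)$; only on that set are all the $a_{ij}$ uniformly small multiples of $(\Delta s_i)^H(\Delta s_j)^H$, and one must then send $\gamma\to\infty$ after $n\to\infty$. Moreover this sandwich only yields the upper bound; the matching lower bound is obtained by a different device, integrating out the odd-indexed $y_i$ exactly against the full Gaussian via the pointwise inequality (\ref{lower}) and only afterwards applying Lemma \ref{cov} to the remaining even-indexed variables on the subdomain $D_{m,2}\subseteq D_{m,1}$. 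Neither the localization, nor the auxiliary parameter $\gamma$, nor the separate lower-bound argument appears in your proposal, so the central step remains unproved as written. (A minor remark: your observation that $I^n_{m,m}\equiv 0$ for odd $m$ under the convention $y_{m+1}=0$ is a legitimate reading of the definition and reaches the same conclusion as the paper's explicit $O(n^{-1/2})$ bound in Step 2.)
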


\begin{proof}  The proof will be done in several steps.

\medskip \noindent
\textbf{Step 1} \quad  We first show tightness.  Let $F_n=\frac{1}{\sqrt{n}}\int^{e^{nt}}_1 f(B^H(s))\, ds$. Using Fourier transform,
\begin{align*}
\E(F_n)^2
&=\frac{2}{(2\pi)^{2d}n}\int^{e^{nt}}_1\int^{s_2}_1\int_{\R^{2d}} \widehat{f}(x_1)\widehat{f}(x_2)\, \exp\Big(-\frac{1}{2}\Var\big(\sum^2_{i=1} x_i\cdot B^H(s_i)\big)\Big)\, ds\, dx.
\end{align*}
Since $|\widehat{f}(x)|\leq c_{\alpha} (|x|^{\alpha}\wedge 1)$ for all $x\in\R^d$ and any $\alpha\in[0,\beta]$, by Lemma \ref{lnd},
\begin{align*}
\E(F_n)^2
&\leq \frac{c_1}{n}\int^{e^{nt}}_1\int^{s_2}_1\int_{\R^{2d}} |\widehat{f}(x_2)|\, \exp\Big(-\frac{\kappa_1}{2} |x_2|^2(s_2-s_1)^{2H}-\frac{\kappa_1}{2} |x_2+x_1|^2s^{2H}_1\Big)\, ds\, dx\\
&\leq \frac{c_2}{n}\Big(\int^{e^{nt}}_1s^{-1}_1\, ds_1\Big)\Big(\int_{\R^d} |\widehat{f}(x_2)||x_2|^{-d}\, dx_2\Big)\\
&\leq c_3\, t.
\end{align*}

\medskip \noindent
\textbf{Step 2} \quad We show the convergence of odd moments.  Assume that $m$ is odd. Recall the estimate  (\ref{eq1}), which allows us to replace 
$\E(F_n)^{m}$ by $I^n_{m}$. 
By Proposition \ref{chain}, we only need to show 
\[
\lim_{n\to\infty} I^n_{m,m}=0,
\]
where
\begin{align*}
I^n_{m, m}
&=\frac{m!}{((2\pi)^d \sqrt{n})^m}\int_{\R^{md}}\int_{D_{m,1}}  \widehat{f}(y_m)  \prod^{\frac{m-1}{2}}_{j=1} |\widehat{f}(y_{2j})|^2\\
&\qquad\qquad \times \exp\bigg(-\frac{1}{2}\Var\Big(\sum\limits^m_{i=1} y_i\cdot \big(B^H(s_i)-B^H(s_{i-1})\big)\Big)\bigg)\, ds\, dy.
\end{align*}

We make the change of variables $u_1=s_1$, $u_i =s_i -s_{i-1}$ for $2\le i\le m$. By Lemma \ref{lnd},
\[
|I^n_{m, m}|\leq \frac{m!}{((2\pi)^d  \sqrt{n})^m}\int_{\R^{md}}\int_{O_{m}}  |\widehat{f}(y_m)|  \prod^{\frac{m-1}{2}}_{j=1} |\widehat{f}(y_{2j})|^2\, \exp\Big(-\frac{\kappa_1}{2}\sum\limits^m_{i=1} |y_i|^2\, u^{2H}_i\Big)\, du\, dy,
\]
where, as before,
\[
O_{m}=\big\{(u_1, \dots, u_{m}): \, 1<u_1, \sum_{i=1}^m u_i <e^{nt}, n^{-m}<u_i<e^{nt},\, i=2,\dots, m \big\}.
\]
Note that $O_m \subset [1,e^{nt}] \times [n^{-m}, e^{nt}]^{m-1}$. As a consequence,
\begin{align*}
\big| I^n_{m, m}\big|
&\leq c_4\, n^{-\frac{m}{2}}\int_{\R^{md}}\int_{[1,e^{nt}] \times [n^{-m}, e^{nt}]^{m-1}}  |\widehat{f}(y_m)|\prod^{\frac{m-1}{2}}_{j=1} |\widehat{f}(y_{2j})|^2\, \exp\Big(-\frac{\kappa_1}{2}\sum\limits^m_{i=1} |y_i|^2\, u^{2H}_i\Big)\, du\, dy\\
&\leq c_5\,n^{-\frac{m}{2}}\, \Big(\int_{\R^d} |\widehat{f}(y)|^2|y|^{-d}dy\Big)^{\frac{m-1}{2}} \Big(\int^{e^{nt}}_{n^{-m}} u^{-1}\, du\Big)^{\frac{m-1}{2}}\, \Big(\int_{\R^d}|\widehat{f}(y)|\, |y|^{-d} dy\Big)\\
&\leq c_6\, n^{-\frac{1}{2}}.
\end{align*}
Combining these estimates gives $\lim\limits_{n\to\infty}\E(F_n)^{m}=0$ when $m$ is odd.

\medskip \noindent
\textbf{Step 3} \quad We show the convergence of even moments. Assume that $m$ is even. Recall  the estimate (\ref{eq1}). 
By Proposition \ref{chain}, it suffices to show 
\begin{align} \label{2nd1-0}
\lim_{n\to\infty} I^n_{m,m}=\Big( \frac{d\, \Gamma(\frac{d}{2})}{\pi^{\frac{d}{2}}(2\pi)^{d}}\int_{\R^d}|\widehat{f}(x)|^2|x|^{-d}\, dx\Big)^{\frac{m}{2}}\, \E\Big(\sqrt{Z(t)}\, \eta\Big)^{m},
\end{align}
where
\begin{align} \nonumber \label{inmm}
I^n_{m, m}
&=\frac{m!}{((2\pi)^d  \sqrt{n})^m}\int_{\R^{md}}\int_{D_{m,1}} \Big(\prod^{m/2}_{j=1} |\widehat{f}(y_{2j})|^2\Big)\\ 
&\qquad\qquad\qquad\times\exp\bigg(-\frac{1}{2}\Var\Big(\sum\limits^m_{i=1} y_i\cdot \big(B^H(s_i)-B^H(s_{i-1})\big)\Big)\bigg)\, ds\, dy.
\end{align}

For $i=1,2,\dots, m$, set $\Delta s_i=s_i-s_{i-1}$ with the convention $s_0=0$.  Define 
\begin{align*}
\widehat{I}^n_{m, m}
&= \frac{m!}{((2\pi)^d  \sqrt{n})^m}\int_{\R^{md}}\int_{\widehat{O}_m} \Big(\prod^{m/2}_{j=1} |\widehat{f}(y_{2j})|^2\Big)\\
&\qquad\qquad\qquad\times\exp\bigg(-\frac{1}{2}\Var\Big(\sum\limits^m_{i=1} y_i\cdot \big(B^H(s_i)-B^H(s_{i-1})\big)\Big)\bigg)\, ds\, dy,
\end{align*}
where
\[
\widehat{O}_m=\big\{(s_1,s_2,\dots, s_m): 1<\Delta s_1<\infty,\, n^{-m}<\Delta s_i<e^{nt},\, i=2,\dots,m\big\}.
\]

Note that $D_{m,1}\subseteq \widehat{O}_m$. We obtain $I^n_{m, m}\leq \widehat{I}^n_{m, m}$. Let
\begin{align*}
\widetilde{I}^n_{m, m}
&= \frac{m!}{((2\pi)^d  \sqrt{n})^m}\int_{\R^{md}}\int_{\widetilde{O}_m} \Big(\prod^{m/2}_{j=1} |\widehat{f}(y_{2j})|^2\Big)\\
&\qquad\qquad\qquad\times\exp\bigg(-\frac{1}{2}\Var\Big(\sum\limits^m_{i=1} y_i\cdot \big(B^H(s_i)-B^H(s_{i-1})\big)\Big)\bigg)\, ds\, dy,
\end{align*}
where
\[
\widetilde{O}_m=\big\{(s_1,s_2,\dots,s_m):  n^2<\Delta s_{2i-1}<e^{nt},\, n^{-1}<\Delta s_{2i}<n,\, i=1,2,\dots,m/2 \big\}.
\]
Then, by Lemma \ref{lnd}, 
\begin{align} \label{prop1}
\lim\sup_{n\to\infty}I^n_{m, m}=\lim\sup_{n\to\infty}\widetilde{I}^n_{m, m}.
\end{align}

For any constant $\gamma>1$, we define 
\begin{align*}
\widetilde{I}^{n,\gamma}_{m, m}
&= \frac{m!}{((2\pi)^d  \sqrt{n})^m}\int_{\R^{md}}\int_{\widetilde{O}_{m,\gamma}} \Big(\prod^{m/2}_{j=1} |\widehat{f}(y_{2j})|^2\Big)\\
&\qquad\qquad\qquad\times\exp\bigg(-\frac{1}{2}\Var\Big(\sum\limits^m_{i=1} y_i\cdot \big(B^H(s_i)-B^H(s_{i-1})\big)\Big)\bigg)\, ds\, dy,
\end{align*}
where
\[
\widetilde{O}_{m,\gamma}=\widetilde{O}_m\cap \Big\{\frac{\Delta s_{2i-1}}{\Delta s_{2j-1}}>\gamma\; \text{or }\; \frac{\Delta s_{2j-1}}{\Delta s_{2i-1}}>\gamma\quad \text{for all}\; i,j=1,2,\dots,m/2 \Big\}.
\] 

By Lemma \ref{lnd}, $|\widetilde{I}^n_{m, m}-\widetilde{I}^{n,\gamma}_{m, m}|$ is less than a constant multiple of $\frac{\log \gamma}{n}$.
Note that
\begin{align*}
\Var\Big(\sum\limits^m_{i=1} y_i\cdot \big(B^H(s_i)-B^H(s_{i-1})\big)\Big)=\sum\limits^m_{i=1} |y_i|^2(\Delta s_i)^{2H}+2\sum_{i<j}(y_i,y_j)\, a_{ij},
\end{align*}
where $a_{ij}=-\frac{1}{2}\big[(s_j-s_i)^{2H}+(s_{j-1}-s_{i-1})^{2H}-(s_j-s_{i-1})^{2H}-(s_{j-1}-s_i)^{2H}\big]$.

Recall $H=\frac{1}{d}\leq\frac{1}{2}$. On the domain $\widetilde{O}_{m,\gamma}$, by Lemma \ref{cov}, we have the following estimates:
\begin{itemize}
\item[(i)] If both $i$ and $j$ are even, then 
\[
|a_{ij}|\leq c_7(1-2H)\frac{(\Delta s_i)^H(\Delta s_j)^H}{n^{(1-2H)}};
\]
\item[(ii)] If both $i$ and $j$ are odd, then
\begin{align*}
|a_{ij}|&\leq c_7(1-2H)\Big(\frac{\Delta s_i\wedge \Delta s_j}{\Delta s_i\vee \Delta s_j}\Big)^H(\Delta s_i)^H(\Delta s_j)^H\\
&\leq c_7(1-2H)\frac{(\Delta s_i)^H(\Delta s_j)^H}{\gamma^H};
\end{align*}
\item[(iii)] If one and only one of $i$ and $j$ is odd, then  
\begin{align*}
|a_{ij}|&\leq c_7(1-2H)\Big(\frac{\Delta s_i\wedge \Delta s_j}{\Delta s_i\vee \Delta s_j}\Big)^H(\Delta s_i)^H(\Delta s_j)^H\\
&\leq c_7(1-2H)\frac{(\Delta s_i)^H(\Delta s_j)^H}{n^{H}}.
\end{align*}
\end{itemize}

Let $\theta=(1-2H)\wedge H$. Applying the Cauchy-Schwartz inequality and the above estimates, we see that
\[
\Var\Big(\sum\limits^m_{i=1} y_i\cdot \big(B^H(s_i)-B^H(s_{i-1})\big)\Big)
\]
is between 
\[
\big(1-\frac{c_8}{\gamma^{H}}-\frac{c_9}{n^{\theta}}\big)\sum\limits^m_{i=1} |y_i|^2(\Delta s_i)^{2H}
\]
and
\[
\big(1+\frac{c_8}{\gamma^{H}}+\frac{c_9}{n^{\theta}}\big)\sum\limits^m_{i=1} |y_i|^2(\Delta s_i)^{2H}.
\]
Therefore,
\begin{align*}
\widetilde{I}^{n,\gamma}_{m, m}
&\leq \frac{m!}{((2\pi)^d  \sqrt{n})^m}\int_{\R^{md}}\int_{\widetilde{O}_{m,\gamma}} \Big(\prod^{m/2}_{j=1} |\widehat{f}(y_{2j})|^2\Big)\\
&\qquad\qquad\qquad\times\exp\bigg(-\frac{1}{2}\big(1-\frac{c_8}{\gamma^{H}}-\frac{c_9}{n^{\theta}}\big)\sum\limits^m_{i=1} |y_i|^2(\Delta s_i)^{2H}\bigg)\, ds\, dy\\
&\leq \frac{m!}{((2\pi)^d  \sqrt{n})^m}\int_{\R^{md}}\int_{\widetilde{O}_m} \Big(\prod^{m/2}_{j=1} |\widehat{f}(y_{2j})|^2\Big)\\
&\qquad\qquad\qquad\times\exp\bigg(-\frac{1}{2}\big(1-\frac{c_8}{\gamma^{H}}-\frac{c_9}{n^{\theta}}\big)\sum\limits^m_{i=1} |y_i|^2(\Delta s_i)^{2H}\bigg)\, ds\, dy.
\end{align*}

After doing some calculus, we can obtain
\begin{align*}
\limsup_{n\to\infty}\widetilde{I}^{n,\gamma}_{m, m}
&\leq m!\, t^{\frac{m}{2}}\, (1-\frac{c_8}{\gamma^{H}})^{-\frac{md}{2}}\,\bigg(\frac{1}{(2\pi)^{\frac{d}{2}}} \int^{\infty}_0e^{-\frac{u^{2H}}{2}}\, du\bigg)^{\frac{m}{2}}\, \bigg(\frac{1}{(2\pi)^d}\int_{\R^{d}} |\widehat{f}(x)|^2|x|^{-d}\, dx\bigg)^{\frac{m}{2}}.
\end{align*}
Note that 
\[
\limsup_{n\to\infty}\widetilde{I}^{n}_{m, m}\leq \limsup_{n\to\infty}\widetilde{I}^{n,\gamma}_{m, m}+\limsup_{n\to\infty}|\widetilde{I}^{n}_{m, m}-\widetilde{I}^{n,\gamma}_{m, m}|.
\]
Combining the above two inequalities with (\ref{prop1}) gives 
\begin{align} \label{sup}
\limsup_{n\to\infty}I^{n}_{m, m}\leq m!\, t^{\frac{m}{2}}\, \bigg(\frac{1}{(2\pi)^{\frac{d}{2}}}\int^{\infty}_0e^{-\frac{u^{2H}}{2}}\, du\bigg)^{\frac{m}{2}}\, \bigg(\frac{1}{(2\pi)^d}\int_{\R^{d}} |\widehat{f}(x)|^2|x|^{-d}\, dx\bigg)^{\frac{m}{2}}.
\end{align}

Recall the definition of $I^n_{m, m}$ in (\ref{inmm}). Integrating with respect to $y_{m-1}$ and then using the following inequality
\begin{equation} \label{lower}
\int_{\R^d} e^{-\frac{1}{2}|x_1|^2u^{2H}-v x_1\cdot x_2}\, dx_1\geq (2\pi)^{\frac{d}{2}} u^{-1},
\end{equation}
\begin{align*}
I^n_{m, m}
&\geq \frac{m!}{(2\pi)^{\frac{(2m-1)d}{2}}n^{\frac{m}{2}}}\int_{\R^{(m-1)d}}\int_{D_{m,1}} \Big(\prod^{m/2}_{j=1} |\widehat{f}(y_{2j})|^2\Big)(\Delta s_{m-1})^{-1}\\
&\qquad\times\exp\bigg(-\frac{1}{2}\Var\Big(\sum\limits^{m}_{i=1,\, i\neq m-1} y_{i}\cdot \big(B^H(s_{i})-B^H(s_{i-1})\big)\Big)\bigg)\, ds\, dy_1\,\cdots dy_{m-2}\, dy_m.
\end{align*}
Repeating the above procedure for all other $y_i$s with $i$ odd gives
\begin{align*}
I^n_{m, m}
&\geq \frac{m!}{(2\pi)^{\frac{3md}{4}}n^{\frac{m}{2}}}\int_{\R^{\frac{md}{2}}}\int_{D_{m,1}} \Big(\prod^{m/2}_{j=1} |\widehat{f}(y_{2j})|^2\Big)\Big(\prod^{m/2}_{j=1} (\Delta s_{2j-1})^{-1}\Big)\\
&\qquad\qquad\qquad\times\exp\bigg(-\frac{1}{2}\Var\Big(\sum\limits^{m/2}_{j=1} y_{2j}\cdot \big(B^H(s_{2j})-B^H(s_{2j-1})\big)\Big)\bigg)\, ds\, d\overline{y},
\end{align*}
where $d\overline{y}=dy_2\, dy_4\,\cdots dy_m$.

Define 
\[
D_{m,2}=\big\{(s_1,s_2,\dots, s_m): n^2\leq\Delta s_{2j-1}\leq \frac{e^{nt}}{m},\; n^{-1}<\Delta s_{2i}<n,\;  j=1,2,\dots,m/2\big\}.
\] 
Note that $D_{m,2}\subseteq D_{m,1}$ when $n$ is large enough. Applying Lemma \ref{cov},
\begin{align} \nonumber \label{inf}
\liminf_{n\to\infty} I^n_{m, m}
&\geq \liminf_{n\to\infty}\frac{m!}{(2\pi)^{\frac{3md}{4}}n^{\frac{m}{2}}}\int_{\R^{\frac{md}{2}}}\int_{D_{m,2}} \Big(\prod^{m/2}_{j=1} |\widehat{f}(y_{2j})|^2\Big)\Big(\prod^{m/2}_{j=1} (\Delta s_{2j-1})^{-1}\Big)\\  \nonumber
&\qquad\qquad\qquad\times\exp\bigg(-\frac{1}{2}\big(1+\frac{c_{10}(1-2H)}{n^{1-2H}}\big)\sum\limits^{m/2}_{j=1} |y_{2j}|^2(\Delta s_{2j})^{2H}\bigg)\, ds\, d\overline{y}\\
& = m!\, t^{\frac{m}{2}}\, \bigg(\frac{1}{(2\pi)^{\frac{d}{2}}}\int^{\infty}_0e^{-\frac{u^{2H}}{2}}\, du\bigg)^{\frac{m}{2}}\bigg(\frac{1}{(2\pi)^d}\int_{\R^{d}} |\widehat{f}(x)|^2|x|^{-d}\, dx\bigg)^{\frac{m}{2}}.
\end{align}

Combining (\ref{sup}) and (\ref{inf}) gives
\begin{align*}
\lim_{n\to\infty} I^n_{m,m}
&=m!\, t^{\frac{m}{2}}\, \bigg(\frac{1}{(2\pi)^{\frac{d}{2}}}\int^{\infty}_0e^{-\frac{u^{2H}}{2}}\, du\bigg)^{\frac{m}{2}}\bigg(\frac{1}{(2\pi)^d}\int_{\R^{d}} |\widehat{f}(x)|^2|x|^{-d}\, dx\bigg)^{\frac{m}{2}}\\
&=C_{f,d}^{m}\, \E\Big(\sqrt{Z(t)}\, \eta\Big)^{m},
\end{align*}
where in the last equality we used Lemma \ref{m} and the following identity 
\[
\frac{2}{(2\pi)^{\frac{d}{2}}}\int^{\infty}_0e^{-\frac{u^{2H}}{2}}\, du=\frac{d}{\pi^{\frac{d}{2}}}\Gamma(\frac{d}{2}).
\]
So the statement  follows. Using the method of moments, the proof is completed.
\end{proof}

\medskip

{\medskip \noindent \textbf{Proof of Theorem \ref{main}.}} Since $f$ is bounded, this follows easily from Proposition \ref{main1}.

\begin{remark}  \label{norm1} Recall the convergence of finite dimensional distributions for two-dimensional Brownian motion in (\ref{kkk}). Theorem \ref{main} implies 
\[
\frac{2}{\pi(2\pi)^{2}} \int_{\R^2} |\widehat{f}(x)|^2 |x|^{-2}\, dx=-\frac{4}{\pi}\int_{\R^2}\int_{\R^2} f(x) f(y)\log|x-y|\, dx\, dy
\]
for all bounded functions $f$ with compact support such that $\int_{\R^2}f(x)\, dx=0$.
\end{remark}

\bigskip

\section{Proof of Theorem \ref{main0}}
In this section, we will show Theorem \ref{main0}, the convergence of finite dimensional distributions. Let $F_n(t)=\frac{1}{\sqrt{n}}\int^{e^{nt}}_0 f(B^H(s))\, ds$. We only need to prove that the finite dimensional distributions of $F_n(t)$ converge to the
corresponding ones of $C_{f,d}\, W(\ell(M^{-1}(t)))$.

Fix a finite number of disjoint intervals $(a_i, b_i]$ with
$i=1,\dots ,N$ and
 $b_i\le a_{i+1}$.  Let $\mathbf{m}=(m_1, \dots, m_N)$ be a fixed multi-index with $m_i\in\N$ for $i=1,\dots ,N$.
 Set $\sum_{i=1}^N m_i=|\mathbf{m}|$ and  $\prod_{i=1}^N m_i!=\mathbf{m}!$.
We need to consider the following sequence of random variables
 \[
        G_n=\prod_{i=1}^N \left( F_n(b_i)- F_n(a_i) \right)^{m_i}
 \]
and compute  $\lim\limits_{n\rightarrow  \infty}  \E(G_n) $. Note that the expectation of $G_n$ can be formulated as 
 \[
      \E(G_n) = \mathbf{m}!\, n^{-\frac{|\mathbf{m}|} 2}  \E \Big(
           \int_{D_\mathbf{m}}  \prod_{i=1}^N\prod_{j=1}^{m_i}f(B^H(s^i_{j}))\, ds \Big),
 \]
where 
\[
D_{\mathbf{m}}=\big\{s \in \R^{|\mathbf{m}|}: e^{na_{i}}<s^i_{1}<\cdots <s^i_{m_i}<e^{nb_{i}}, 1\le i\le N\big\}.
\]

Here and in the sequel we denote the coordinates of a point $s\in\R^{|\mathbf{m}|}$ as
$s= (s^i_j)$, where   $ 1\le i \leq   N$ and $1\leq j \leq m_i $. Define
\begin{equation}
D_{\mathbf{m},1}=\big\{s \in D_{\mathbf{m}}: s^i_j-s^i_{j-1}\geq n^{-|\mathbf{m}|},\; 1\leq j\leq m_i,\; 1\le i\le N\big\}  \label{dm}
\end{equation}
with the convention $s^{i}_0=s^{i-1}_{m_{i-1}}$ for $1\le i\le N$.

For simplicity of notation, we define 
\[
J_0=\big\{(i,j): 1\leq i\leq N, 1\leq j\leq m_i \big\}.
\]
For any $(i_1, j_1)$ and $(i_2,j_2)\in J_0$, we define the following dictionary ordering 
\[
(i_1, j_1)\leq (i_2,j_2)
\]
if $i_1<i_2$ or $i_1=i_2$ and $j_1\leq j_2$. For any $(i,j)$ in $J_0$, under the above ordering, $(i,j)$ is the $(\sum\limits^{i-1}_{k=1}m_k+j)$-th element in $J_0$ and we define $\#(i,j)=\sum\limits^{i-1}_{k=1}m_k+j$.

\begin{proposition} \label{odd} Suppose that at least one of the exponents $m_i$ is odd. Then
\begin{equation*}
\lim\limits_{n\to\infty}\E(G_n)=0.
\end{equation*}
\end{proposition}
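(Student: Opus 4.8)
The plan is to run the argument of Step~2 in the proof of Proposition~\ref{main1} simultaneously over all $|\mathbf m|$ time points, organized by the dictionary ordering $\#(i,j)$. Since $f$ is bounded, I would first replace $D_{\mathbf m}$ by $D_{\mathbf m,1}$ from (\ref{dm}), the resulting error being negligible by the same computation that controls $|\E(F_n)^m-I^n_m|$ in the proof of Theorem~\ref{main} (the estimate on page~166 of \cite{KaKos}). Applying the Fourier transform to $\E(G_n)$ and making the change of variables $y_{(i,j)}=\sum_{(k,l)\ge(i,j)}x_{(k,l)}$ in dictionary order then turns the Gaussian exponent into the increment form $\Var\big(\sum_{(i,j)} y_{(i,j)}\cdot(B^H(s^i_j)-B^H(s^i_{j-1}))\big)$, to which the reformulation of Lemma~\ref{lnd} applies.

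Next I would introduce partially paired integrals $I^n_{\mathbf m,k}$ and factors $I_k$ exactly as in the definitions preceding Proposition~\ref{chain}, with the single index $k$ now running through the $|\mathbf m|$ dictionary positions, and prove the analogue $|I^n_{\mathbf m,k-1}-I^n_{\mathbf m,k}|\le c\,n^{-\lambda}$ of Proposition~\ref{chain}. Its proof transfers essentially verbatim, since it uses only $|\widehat f(x)|\le c_\alpha(|x|^\alpha\wedge1)$ and the lower bound in Lemma~\ref{lnd}, neither of which sees the block structure, once the integration domains are enlarged to a common box of the form $[n^{-|\mathbf m|},e^{nb_N}]^{|\mathbf m|}$. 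This reduces the claim to $\lim_{n\to\infty}I^n_{\mathbf m,|\mathbf m|}=0$, where in $I^n_{\mathbf m,|\mathbf m|}$ every even dictionary position carries a factor $|\widehat f(y_{2j})|^2$, every odd dictionary position carries no Fourier factor, and the last position carries a lone $\widehat f$ precisely when $|\mathbf m|$ is odd.

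I would then bound $I^n_{\mathbf m,|\mathbf m|}$ by the power counting of Step~2. After the substitution $u_{(i,j)}=\Delta s^i_j$ and an application of Lemma~\ref{lnd}, each odd-position variable carrying no Fourier factor is integrated out via (\ref{lower}) into a factor of order $u^{-1}$, whose time integral contributes one power of $n$; each factor $|\widehat f|^2$ and the possible lone $\widehat f$ integrate to finite constants, the convergence near the origin being guaranteed by $\widehat f(0)=\int_{\R^d}f(x)\,dx=0$. Thus $|I^n_{\mathbf m,|\mathbf m|}|$ is of order $n^{P-|\mathbf m|/2}$, where $P$ counts the odd positions that actually produce a power of $n$.

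The main obstacle is to show that the presence of at least one odd $m_i$ forces $P\le \tfrac{|\mathbf m|}{2}-\tfrac12$, so that $I^n_{\mathbf m,|\mathbf m|}=O(n^{-1/2})\to0$. When $|\mathbf m|$ is odd this is immediate, since the global last position is unpaired exactly as in Step~2. The delicate case is $|\mathbf m|$ even with an even number ($\ge2$) of odd blocks: consecutive pairing in the dictionary order then forces at least one pair to straddle a block boundary, joining the last point of some block~$i$ to the first point of block~$i+1$, and I would have to show that such a straddling pair fails to generate its full power of $n$. When the blocks are strictly separated ($b_i<a_{i+1}$) this follows from the exponentially large gap across the barrier $e^{nb_i}<e^{na_{i+1}}$. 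The genuinely hard point, relevant to the finite-dimensional distributions where the intervals are adjacent ($b_i=a_{i+1}$), is that the barrier no longer creates a large temporal gap, so the straddling pair cannot be discarded by a crude gap estimate; instead one must exploit $\widehat f(0)=0$ together with the alternating large--small gap structure of the leading region (the analogue of $\widetilde{O}_m$) to account for the missing power of $n$.
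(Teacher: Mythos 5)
Your setup---restricting to $D_{\mathbf m,1}$, Fourier transform, the change of variables in dictionary order, and the block-insensitive analogue of Proposition \ref{chain} reducing everything to $I^n_{\mathbf m,|\mathbf m|}$---is exactly what the paper does, and your treatment of the case $|\mathbf m|$ odd is correct. But for the case $|\mathbf m|$ even with some $m_i$ odd you stop precisely at the point where the proof actually has to be made: you correctly observe that a paired factor must straddle the boundary between the first odd block $\ell$ and block $\ell+1$, you dispose of the strictly separated case $b_\ell<a_{\ell+1}$ by the exponential gap, and then you declare the adjacent case $b_\ell=a_{\ell+1}$ to be ``the genuinely hard point'' and only gesture at ``exploiting $\widehat f(0)=0$ together with the alternating large--small gap structure.'' That is the whole content of the proposition in the setting the theorem needs (finite-dimensional distributions are computed from increments over adjacent intervals, and the paper explicitly allows $b_i\le a_{i+1}$), so as it stands the proposal has a genuine gap rather than a proof.

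The paper closes this gap with a concrete dichotomy on the position of $s^\ell_{m_\ell}$, the time variable sitting at the block boundary, which is simultaneously the right endpoint of the last (free, unpaired) increment of block $\ell$ and the left endpoint of the paired increment $s^{\ell+1}_1-s^\ell_{m_\ell}$. Either $s^\ell_{m_\ell}\ge e^{nb_\ell}-nb_\ell$, in which case that free increment is confined to a window of length $nb_\ell$ and its logarithmic time integral contributes only $O(\ln(1+n^{|\mathbf m|+1}b_\ell))$ instead of the full factor $n$; or $s^\ell_{m_\ell}\le e^{nb_\ell}-nb_\ell$, in which case $s^{\ell+1}_1-s^\ell_{m_\ell}\ge e^{na_{\ell+1}}-e^{nb_\ell}+nb_\ell\ge nb_\ell$ even when $a_{\ell+1}=b_\ell$, and the bound $|\widehat f(x)|\le c_\beta|x|^\beta$ makes the straddling paired factor contribute $\int_{nb_\ell}^{\infty}v^{-1-2H\beta}\,dv=O(n^{-2H\beta})$ instead of $O(1)$. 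Either way one loses at least a factor $n^{-2H\beta}$ (or $n^{-1}\log n$) relative to the naive $O(1)$ power count, giving $\lim_n M_n=0$. Note that this argument uses only the moment condition $\int|f(x)||x|^\beta\,dx<\infty$ (hence $|\widehat f(x)|\le c_\beta|x|^\beta$), not any fine structure of the leading region $\widetilde O_m$; if you want to complete your write-up, this two-case split is the missing step.
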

\begin{proof}  Let 
\[
M_n=\mathbf{m}!\, n^{-\frac{|\mathbf{m}|} 2}  \E \Big(
           \int_{D_{\mathbf{m},1}}  \prod_{i=1}^N\prod_{j=1}^{m_i}f(B^H(s^i_{j}))\, ds \Big).
\]
It is easy to see
\[
\lim_{n\to\infty}\left[ \E(G_n)-M_n\right]=0.
\]
So it suffices to show  $\lim\limits_{n\to\infty} M_n=0$.

By Fourier transform, $M_n$ is equal to
\begin{align*}
 \frac{\mathbf{m}!}{(2\pi)^{|\mathbf{m}|d}}\,  n^{-\frac{|\mathbf{m}|}{2}}  \int_{\R^{|\mathbf{m}|d}}  \int_{D_{\mathbf{m},1}} 
           \Big( \prod^{N}_{i=1}\prod^{m_i}_{j=1} \widehat{f}(y^i_j)\Big) \exp\bigg(-\frac{1}{2}\Var\Big( \sum^{N}_{i=1}\sum^{m_i}_{j=1} y^i_j\cdot B^H(s^i_j) \Big)\bigg) \, ds\, dy.
\end{align*}
Making the
change of variables $x^i_j=\sum\limits_{(\ell,k)\geq (i,j)} y^{\ell}_k$ for $1\leq i \leq N$ and $1\leq j\leq m_i$,
\begin{align*}
M_n
&=\frac{\mathbf{m}!}{(2\pi)^{|\mathbf{m}|d}}\,  n^{-\frac{|\mathbf{m}|}{2}}  \int_{\R^{|\mathbf{m}|d}}  \int_{D_{\mathbf{m},1}} 
            \prod^{N}_{i=1}\prod^{m_i}_{j=1} \widehat{f}(x^i_j-x^i_{j+1})\\
&\qquad\qquad\times \exp\bigg(-\frac{1}{2}\Var\Big( \sum^{N}_{i=1}\sum^{m_i}_{j=1} x^i_j\cdot \big(B^H(s^i_j)-B^H(s^i_{j-1})\big) \Big)\bigg) \, ds\, dx.
\end{align*}
Applying Proposition \ref{chain},  we obtain
\begin{align*}
\lim_{n\to\infty} M_n
&=\frac{\mathbf{m}!}{(2\pi)^{|\mathbf{m}|d}}\,  \lim_{n\to\infty} n^{-\frac{|\mathbf{m}|}{2}}  \int_{\R^{|\mathbf{m}|d}}  \int_{D_{\mathbf{m},1}} 
           \bigg( \prod_{(i,j)\in J_e} |\widehat{f}(x^i_j)|^2\bigg)\, I_{|\mathbf{m}|}\\
&\qquad\qquad\times \exp\bigg(-\frac{1}{2}\Var\Big( \sum^{N}_{i=1}\sum^{m_i}_{j=1} x^i_j\cdot \big(B^H(s^i_j)-B^H(s^i_{j-1})\big) \Big)\bigg) \, ds\, dx,
\end{align*}
where $J_e=\big\{(i,j)\in J_0: \#(i,j)\; \text{is even}\big\}$ and 
\[
I_{|\mathbf{m}|}=
\begin{cases}
\widehat{f}(x^N_{m_N}), & \text{if}\; |\mathbf{m}| \; \text{is odd};\\
1, & \text{if}\; |\mathbf{m}| \; \text{is even}.
\end{cases}
\]

It is easy to see that $\lim\limits_{n\to\infty} M_n=0$ when $|\mathbf{m}|$ is odd. We shall show $\lim\limits_{n\to\infty}M_n=0$ when $|\mathbf{m}|$ is even. In this case,
\begin{align*}
\lim_{n\to\infty} M_n
&=\frac{\mathbf{m}!}{(2\pi)^{|\mathbf{m}|d}}\,  \lim_{n\to\infty} n^{-\frac{|\mathbf{m}|}{2}}  \int_{\R^{|\mathbf{m}|d}}  \int_{D_{\mathbf{m},1}} 
           \Big( \prod_{(i,j)\in J_e} |\widehat{f}(x^i_j)|^2\Big)\\
&\qquad\qquad\times \exp\bigg(-\frac{1}{2}\Var\Big( \sum^{N}_{i=1}\sum^{m_i}_{j=1} x^i_j\cdot \big(B^H(s^i_j)-B^H(s^i_{j-1})\big) \Big)\bigg) \, ds\, dx.
\end{align*}
Note that the right hand side of the above equality is positive. By Lemma \ref{lnd},
\begin{align*}
 \limsup_{n\to\infty} |M_n|
&\leq c_1  \limsup_{n\to\infty} n^{-\frac{|\mathbf{m}|}{2}}  \int_{\R^{|\mathbf{m}|d}}  \int_{D_{\mathbf{m},1}} 
           \Big( \prod_{(i,j)\in J_e} |\widehat{f}(x^i_j)|^2\Big)\\
&\qquad\qquad\times \exp\bigg(-\frac{\kappa_1}{2}\sum^{N}_{i=1}\sum^{m_i}_{j=1} |x^i_j|^2(s^i_j-s^i_{j-1})^{2H}\bigg) \, ds\, dx\\
&:= c_1 \limsup_{n\to\infty} I_n.
\end{align*}

Assume that $m_{\ell}$ is the first odd exponent. If $s^{\ell}_{m_{\ell}}\geq e^{nb_{\ell}}-nb_{\ell}$, then integrating with respect to proper $x^i_j$s and $s^i_j$s gives
\begin{align*}
I_n 
&\leq c_2\, n^{-1} \sup_{s^{\ell}_{m_{\ell}-1}\in (e^{na_\ell}, e^{nb_{\ell}})} \int_{\R^{d}}  \int_{Q_1}
           |\widehat{f}(x^{\ell+1}_1)|^2(s^{\ell}_{m_{\ell}}-s^{\ell}_{m_{\ell}-1})^{-1}\\
&\qquad\qquad\times   \exp\Big(-\frac{\kappa_1}{2} |x^{\ell+1}_1|^2(s^{\ell+1}_1-s^{\ell}_{m_{\ell}})^{2H}\Big) \, ds^{\ell+1}_1\, ds^{\ell}_{m_{\ell}}\, dx^{\ell+1}_1,
\end{align*}
where $Q_1=\big\{e^{na_{\ell+1}}<s^{\ell+1}_1<e^{nb_{\ell+1}},\, e^{nb_{\ell}}-nb_{\ell}\leq s^{\ell}_{m_{\ell}}<e^{nb_{\ell}},\, s^{\ell}_{m_{\ell}}-s^{\ell}_{m_{\ell-1}}\geq n^{-|\mathbf{m}|}\big\}$.

Integrating with respect to $s^{\ell+1}_1$ and $x^{\ell+1}_1$ gives
\begin{align*}
I_n
&\leq c_3\, n^{-1}  \sup_{s^{\ell}_{m_{\ell}-1}\in (e^{na_\ell}, e^{nb_{\ell}})}  \int^{e^{nb_{\ell}}}_{(s^{\ell}_{m_{\ell}-1}+n^{-|\mathbf{m}|})\vee (e^{nb_{\ell}}-nb_{\ell})}
          (s^{\ell}_{m_{\ell}}-s^{\ell}_{m_{\ell}-1})^{-1}\, ds^{\ell}_{m_{\ell}} \\
&\leq c_4\, n^{-1}\ln(1+n^{|\mathbf{m}|+1}b_{\ell}).
\end{align*}
On the other hand, if $s^{\ell}_{m_{\ell}}\leq e^{nb_{\ell}}-nb_{\ell}$, then integrating with respect to proper $x^i_j$s and $s^i_j$s gives
\begin{align*}
I_n 
&\leq c_5\, n^{-1} \sup_{s^{\ell}_{m_{\ell}-1}\in (e^{na_\ell}, e^{nb_{\ell}})} \int_{\R^{d}}  \int_{Q_2} |\widehat{f}(x^{\ell+1}_1)|^2(s^{\ell}_{m_{\ell}}-s^{\ell}_{m_{\ell}-1})^{-1}\\
&\qquad\qquad\times   \exp\Big(-\frac{\kappa_1}{2} |x^{\ell+1}_1|^2(s^{\ell+1}_1-s^{\ell}_{m_{\ell}})^{2H}\Big) \, ds^{\ell+1}_1\, ds^{\ell}_{m_{\ell}}\, dx^{\ell+1}_1,
\end{align*}
where $Q_2=\big\{e^{na_{\ell+1}}<s^{\ell+1}_1<e^{nb_{\ell+1}},\, s^{\ell}_{m_{\ell-1}}+ n^{-|\mathbf{m}|}\leq s^{\ell}_{m_{\ell}}\leq e^{nb_{\ell}}-nb_{\ell}\big\}$.

Recall $|\widehat{f}(x)|\leq c_{\beta}|x|^{\beta}$ for all $x\in\R^d$.
\begin{align*}
I_n 
&\leq c_6\, n^{-1} \sup_{s^{\ell}_{m_{\ell}-1}\in (e^{na_\ell}, e^{nb_{\ell}})} \int_{\R^{d}}  \int_{Q_2}
           |x^{\ell+1}_1|^{2\beta}(s^{\ell}_{m_{\ell}}-s^{\ell}_{m_{\ell}-1})^{-1}\\
&\qquad\qquad \times \exp\Big(-\frac{\kappa_1}{2} |x^{\ell+1}_1|^2(s^{\ell+1}_1-s^{\ell}_{m_{\ell}})^{2H}\Big) \, ds^{\ell+1}_1\, ds^{\ell}_{m_{\ell}}\, dx^{\ell+1}_1\\
&\leq c_7\, n^{-1}  \sup_{s^{\ell}_{m_{\ell}-1}\in (e^{na_\ell}, e^{nb_{\ell}})}  \int_{Q_2}
           (s^{\ell}_{m_{\ell}}-s^{\ell}_{m_{\ell}-1})^{-1}\, (s^{\ell+1}_1-s^{\ell}_{m_{\ell}})^{-1-2H\beta}\, ds^{\ell+1}_1\, ds^{\ell}_{m_{\ell}}\\
&\leq c_8\,  n^{-1-2H\beta}  \sup_{s^{\ell}_{m_{\ell}-1}\in (e^{na_\ell}, e^{nb_{\ell}})}    \int^{e^{nb_{\ell}}}_{s^{\ell}_{m_{\ell}-1}+n^{-|\mathbf{m}|}} 
           (s^{\ell}_{m_{\ell}}-s^{\ell}_{m_{\ell}-1})^{-1}\, ds^{\ell}_{m_{\ell}}\\
&\leq c_9\, n^{-2H \beta}.
\end{align*}
Therefore, $\lim\limits_{n\to\infty} M_n=0$ and thus $\lim\limits_{n\to\infty}\E (G_n)=0$.
\end{proof}

\bigskip
 Consider now  the convergence of moments when all exponents $m_i$ are even.  
 \begin{proposition} \label{even} Suppose that all exponents $m_i$ are even. Then
 \begin{equation*}  \label{c1}
\lim_{n\to\infty}\E(G_n)=C_{f,d}^{|\mathbf{m}|}\, \mathbb{E} \Big[
\prod_{i=1}^N   \big( W(\ell(M^{-1}(b_i)))- W(\ell(M^{-1}(a_i))) \big)^{m_i}
\Big].
\end{equation*}
\end{proposition}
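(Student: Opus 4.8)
The plan is to run the argument of Proposition \ref{main1} one interval at a time and then to read off the value of the resulting purely temporal integral from the known two-dimensional Brownian motion result. First I would pass from $\E(G_n)$ to its restriction $M_n$ over $D_{\mathbf{m},1}$, for which $\E(G_n)-M_n\to 0$, apply the Fourier transform, perform the change of variables $x^i_j=\sum_{(\ell,k)\ge(i,j)}y^\ell_k$, and invoke Proposition \ref{chain} exactly as in the proof of Proposition \ref{odd}. Because every $m_i$ is even, $\sum_{k<i}m_k$ is even, so $\#(i,j)$ has the same parity as $j$; hence $(i,j)\in J_e$ precisely when $j$ is even, and the surviving diagonal integrand $\prod_{(i,j)\in J_e}|\widehat f(x^i_j)|^2$ pairs the times $(s^i_1,s^i_2),(s^i_3,s^i_4),\dots$ \emph{inside} each interval, never across a boundary. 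This interval-respecting pairing is the structural fact that lets the intervals decouple.

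Next I would expand
\[
\Var\Big(\sum_{i,j}x^i_j\cdot\big(B^H(s^i_j)-B^H(s^i_{j-1})\big)\Big)=\sum_{i,j}|x^i_j|^2(\Delta s^i_j)^{2H}+2\sum_{(i_1,j_1)<(i_2,j_2)}(x^{i_1}_{j_1},x^{i_2}_{j_2})\,a
\]
and bound every cross covariance $a$ by Lemma \ref{cov}, exactly as in Step 3 of Proposition \ref{main1}. Two mechanisms drive these terms to zero: two increments sitting in different intervals are typically separated by a large time gap, so part (ii) of Lemma \ref{cov} yields a vanishing ratio (the boundary configurations where such times are close fill only a negligible portion of the domain), while the odd--even off-diagonal terms inside a single interval are killed by the same $\gamma$-truncation and $n^{-\theta}$ bound as before. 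This sandwiches the variance between $(1\pm\varepsilon_n)\sum_{i,j}|x^i_j|^2(\Delta s^i_j)^{2H}$ with $\varepsilon_n\to0$, so the Gaussian weight factorizes over the pairs. Integrating out the spatial variables attached to the odd gaps via (\ref{lower}) and its matching upper bound produces the weights $(\Delta s^i_{2j-1})^{-1}$, while the even gaps retain the factors $|\widehat f(x^i_{2j})|^2$ together with their Gaussians; using the identity $\frac{2}{(2\pi)^{d/2}}\int_0^\infty e^{-u^{2H}/2}\,du=\frac{d}{\pi^{d/2}}\Gamma(\frac d2)$, the latter assemble pair by pair into $C_{f,d}^{|\mathbf{m}|}$.

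At this stage I would have $\lim_{n\to\infty}\E(G_n)=C_{f,d}^{|\mathbf{m}|}\,T_{\mathbf{m}}$, where $T_{\mathbf{m}}$ is the limit of the temporal integral of $\mathbf{m}!\,n^{-|\mathbf{m}|/2}\prod_{i,j}(\Delta s^i_{2j-1})^{-1}$ over the ordered domain determined by the intervals $(a_i,b_i]$. The essential point is that $T_{\mathbf{m}}$ involves neither $\widehat f$ nor $d$, every trace of the test function and the dimension having been absorbed into $C_{f,d}$. To evaluate it I would specialize the whole computation to planar Brownian motion ($d=2$, $H=\frac12$) with a bounded, compactly supported $g$ satisfying $\int_{\R^2}g=0$, obtaining $\lim_n\E(G^{g}_n)=C_{g,2}^{|\mathbf{m}|}\,T_{\mathbf{m}}$ with the \emph{same} $T_{\mathbf{m}}$. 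On the other hand the Kasahara--Kotani convergence (\ref{kkk}) together with the method of moments gives
\[
\lim_{n\to\infty}\E(G^{g}_n)=\langle g\rangle^{|\mathbf{m}|/2}\,\E\Big[\prod_{i=1}^N\big(W(\ell(M^{-1}(b_i)))-W(\ell(M^{-1}(a_i)))\big)^{m_i}\Big],
\]
and Remark \ref{norm1} gives $C_{g,2}^2=\langle g\rangle$. Comparing the two expressions identifies $T_{\mathbf{m}}$ with the displayed expectation, and substituting it back yields the asserted identity.

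I expect the hardest part to be the covariance bookkeeping above: one must control the $O(|\mathbf{m}|^2)$ cross terms uniformly enough to survive the $n^{-|\mathbf{m}|/2}$ normalization, carefully separating the inter-interval terms (small by separation of the time windows, with the touching case $b_i=a_{i+1}$ handled through negligibility of the near-boundary region) from the intra-interval ones. A second delicate point, on which the whole strategy rests, is verifying that $T_{\mathbf{m}}$ is genuinely free of $f$ and $d$, since that is precisely what licenses importing its value from the planar Brownian motion computation.
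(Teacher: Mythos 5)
Your proposal follows essentially the same route as the paper: reduce $\E(G_n)$ to the diagonalized integral via Proposition \ref{chain}, sandwich the variance using Lemma \ref{cov} and the lower bound (\ref{lower}) so that each even gap contributes a factor of $C_{f,d}^2$ and each odd gap a weight $(\Delta s^i_{2j-1})^{-1}$, and then identify the remaining purely temporal limit $T_{\mathbf{m}}$ --- which is indeed free of $f$ and $d$ --- by comparing with the $H=\tfrac12$, $d=2$ case via (\ref{kkk}) and Remark \ref{norm1}. This is exactly the paper's argument (equations (\ref{sup1})--(\ref{lim})), so no further comparison is needed.
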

\begin{proof} Recall the domain $D_{\mathbf{m},1}$ in (\ref{dm}). Define 
\begin{align*}
I^n_{\mathbf{m}}&=\frac{\mathbf{m}!}{(2\pi)^{|\mathbf{m}|d}}\,  n^{-\frac{|\mathbf{m}|}{2}}  \int_{\R^{|\mathbf{m}|d}}  \int_{D_{\mathbf{m},1}} 
            \prod^{N}_{i=1}\prod^{m_i/2}_{k=1} |\widehat{f}(x^i_{2k})|^2\\
&\qquad\qquad\times \exp\bigg(-\frac{1}{2}\Var\Big( \sum^{N}_{i=1}\sum^{m_i}_{j=1} x^i_j\cdot \big(B^H(s^i_j)-B^H(s^i_{j-1})\big) \Big)\bigg) \, ds\, dx.
\end{align*}
The proof of Proposition \ref{chain} says 
\begin{align*}
\lim_{n\to\infty}\big[\E(G_n)-I^n_{\mathbf{m}}\big]=0.
\end{align*}
We make the change of variables $u^i_j=s^i_j-s^i_{j-1}$ for $1\leq j\leq m_i$ and $1\leq i\leq N$. Recall $\theta=(1-2H)\wedge H$. Repeating the procedure for the proof of the inequality (\ref{sup}) gives
\begin{align}  \label{sup1}
\limsup_{n\to\infty} I^n_{\mathbf{m}}
&\leq \frac{\mathbf{m}!}{(2\pi)^{|\mathbf{m}|d}} \limsup_{n\to\infty}\, n^{-\frac{|\mathbf{m}|}{2}}  \int_{\R^{|\mathbf{m}|d}}  \int_{D_{\mathbf{m},1}} 
            \prod^{N}_{i=1}\prod^{m_i/2}_{k=1} |\widehat{f}(x^i_{2k})|^2\\  \nonumber
&\qquad\qquad\times \exp\Big(-\frac{1}{2}\big(1-\frac{c_1(1-2H)}{n^{\theta}}\big)\sum^{N}_{i=1}\sum^{m_i}_{j=1} |x^i_j|^2(s^i_j-s^i_{j-1})^{2H} \Big) \, ds\, dx\\  \nonumber
&= \frac{\mathbf{m}!}{(2\pi)^{\frac{3}{4}|\mathbf{m}|d}} \limsup_{n\to\infty}\, n^{-\frac{|\mathbf{m}|}{2}}  \int_{\R^{|\mathbf{m}|d/2}}  \int_{D_{\mathbf{m},2}} 
            \prod^{N}_{i=1}\prod^{m_i/2}_{k=1}\left(|\widehat{f}(x^i_{2k})|^2(u^i_{2k-1})^{-1}\right)\\  \nonumber
&\qquad\qquad\times \exp\Big(-\frac{1}{2}\big(1-\frac{c_1(1-2H)}{n^{\theta}}\big)\sum^{N}_{i=1}\sum^{m_i/2}_{k=1} |x^i_{2k}|^2(u^i_{2k})^{2H} \Big) \, du\, d\overline{x},
\end{align}
where $d\overline{x}=\prod\limits^{N}_{i=1}\prod\limits^{m_i/2}_{k=1} dx^i_{2k-1}$ and 
\[
D_{\mathbf{m},2}=\Big\{e^{na_i}<\sum_{(\ell,k)\leq (i,j)} u^{\ell}_k<e^{nb_i},\, u^i_j\geq n^{-|\mathbf{m}|},\, 1\leq i\leq N,\, 1\leq j\leq m_i\Big\}.
\]

Define
\begin{align*}
U_{\mathbf{m}}&= \frac{\mathbf{m}!}{(2\pi)^{|\mathbf{m}|d}} \limsup_{n\to\infty}\, n^{-\frac{|\mathbf{m}|}{2}}  \int_{\R^{|\mathbf{m}|d}}  \int_{D_{\mathbf{m},1}} 
            \prod^{N}_{i=1}\prod^{m_i/2}_{k=1} |\widehat{f}(x^i_{2k})|^2\\  \nonumber
&\qquad\qquad\times \exp\Big(-\frac{1}{2}\big(1-\frac{c_1(1-2H)}{n^{\theta}}\big)\sum^{N}_{i=1}\sum^{m_i}_{j=1} |x^i_j|^2(s^i_j-s^i_{j-1})^{2H} \Big) \, ds\, dx.
\end{align*}
We can easily get
\begin{align*}
U_{\mathbf{m}}\leq C^{\frac{|\mathbf{m}|}{2}}_{f,d}\, \mathbf{m}! \limsup_{n\to\infty}\, n^{-\frac{|\mathbf{m}|}{2}}  \int_{O_{\mathbf{m}}} \prod^{N}_{i=1}\prod^{m_i/2}_{k=1} (u^i_{2k-1})^{-1}\, d\overline{u},
\end{align*}
where $d\overline{u}=\prod\limits^{N}_{i=1}\prod\limits^{m_i/2}_{k=1} du^i_{2k-1}$ and
\[
O_{\mathbf{m}}=\Big\{e^{na_i}<\sum_{(\ell,k)\leq (i,j)} u^{\ell}_{2k-1}<e^{nb_i},\; u^i_{2j-1}\geq n^{-|\mathbf{m}|},\; 1\leq i\leq N,\; 1\leq j\leq \frac{m_i}{2}\Big\}.
\]

On the other hand,
\begin{align*}
U_{\mathbf{m}}
&\geq \frac{\mathbf{m}!}{(2\pi)^{|\mathbf{m}|d}} \limsup_{n\to\infty}\, n^{-\frac{|\mathbf{m}|}{2}}  \int_{\R^{|\mathbf{m}|d}}  \int_{D_{\mathbf{m},2}} 
            \prod^{N}_{i=1}\prod^{m_i/2}_{k=1} |\widehat{f}(x^i_{2k})|^2\\  \nonumber
&\qquad\qquad\times \exp\Big(-\frac{1}{2}\sum^{N}_{i=1}\sum^{m_i}_{j=1} |x^i_j|^2(u^i_j)^{2H} \Big) \, du\, dx\\
&\geq \frac{\mathbf{m}!}{(2\pi)^{|\mathbf{m}|d}} \limsup_{n\to\infty}\, n^{-\frac{|\mathbf{m}|}{2}}  \int_{\R^{|\mathbf{m}|d}}  \int_{D_{\mathbf{m},3}} 
            \prod^{N}_{i=1}\prod^{m_i/2}_{k=1} |\widehat{f}(x^i_{2k})|^2\\  \nonumber
&\qquad\qquad\times \exp\Big(-\frac{1}{2}\sum^{N}_{i=1}\sum^{m_i}_{j=1} |x^i_j|^2(u^i_j)^{2H} \Big) \, du\, dx,
\end{align*}
where $D_{\mathbf{m},3}=D_{\mathbf{m},2}\cap \Big\{n^2<u^i_{2k-1},\,1/n<u^i_{2k}<n,\, 1\leq i\leq N,\, 1\leq k\leq m_i/2\Big\}$.

Define
\[
O_{\mathbf{m},1}=O_{\mathbf{m}}\cap \Big\{\sum_{(\ell,k)\leq (i,j)} u^{\ell}_{2k-1}<e^{nb_i}-|\mathbf{m}|n,\; u^i_{2j-1}\geq n^2,\; 1\leq i\leq N,\; 1\leq j\leq \frac{m_i}{2}\Big\}.
\]
We can obtain
\begin{align*}
U_{\mathbf{m}}
&\geq C^{\frac{|\mathbf{m}|}{2}}_{f,d}\, \mathbf{m}! \limsup_{n\to\infty}\,  n^{-\frac{|\mathbf{m}|}{2}}  \int_{O_{\mathbf{m},1}} 
            \prod^{N}_{i=1}\prod^{m_i/2}_{k=1} (u^i_{2k-1})^{-1}\, d\overline{u}\\   \nonumber
&= C^{\frac{|\mathbf{m}|}{2}}_{f,d}\, \mathbf{m}! \limsup_{n\to\infty}\,  n^{-\frac{|\mathbf{m}|}{2}}  \int_{O_{\mathbf{m}}} 
            \prod^{N}_{i=1}\prod^{m_i/2}_{k=1} (u^i_{2k-1})^{-1}\, d\overline{u}.
\end{align*}
Therefore
\begin{equation} \label{supeq}
U_{\mathbf{m}}=C^{\frac{|\mathbf{m}|}{2}}_{f,d}\, \mathbf{m}! \limsup_{n\to\infty}\,  n^{-\frac{|\mathbf{m}|}{2}}  \int_{O_{\mathbf{m}}} 
            \prod^{N}_{i=1}\prod^{m_i/2}_{k=1} (u^i_{2k-1})^{-1}\, d\overline{u}.
\end{equation}

Applying the inequality (\ref{lower}) repeatedly gives
\begin{align*}
\liminf_{n\to\infty} I^n_{\mathbf{m}}
&\geq \frac{\mathbf{m}!}{(2\pi)^{\frac{3}{4}|\mathbf{m}|d}}\liminf_{n\to\infty}\,  n^{-\frac{|\mathbf{m}|}{2}}  \int_{\R^{|\mathbf{m}|d/2}}  \int_{D_{\mathbf{m},1}} 
            \prod^{N}_{i=1}\prod^{m_i/2}_{k=1} \left( |\widehat{f}(y^i_{2k})|^2(s^i_{2k-1}-s^i_{2k-2})^{-1}\right)\\  
&\qquad\qquad\times \exp\bigg(-\frac{1}{2}\Var\Big( \sum^{N}_{i=1}\sum^{m_i/2}_{k=1} y^i_{2k}\cdot \big(B^H(s^i_{2k})-B^H(s^i_{2k-1})\big) \Big)\bigg) \, ds\, dy.
\end{align*}

By Lemma \ref{cov},
\begin{align}   \label{low1}
\liminf_{n\to\infty} I^n_{\mathbf{m}}    
&\geq \frac{\mathbf{m}!}{(2\pi)^{\frac{3}{4}|\mathbf{m}|d}}\liminf_{n\to\infty}\,  n^{-\frac{|\mathbf{m}|}{2}}  \int_{\R^{|\mathbf{m}|d/2}}  \int_{D_{\mathbf{m},2}} 
            \prod^{N}_{i=1}\prod^{m_i/2}_{k=1} \left( |\widehat{f}(y^i_{2k})|^2(u^i_{2k-1})^{-1}\right)\\   \nonumber
&\qquad\qquad\times \exp\bigg(-\frac{1}{2}\big(1+\frac{c_2(1-2H)}{n^{1-2H}}\big)\sum^{N}_{i=1}\sum^{m_i/2}_{k=1} |y^i_{2k}|^2(u^i_{2k})^{2H} \bigg) \, du\, dy\\   \nonumber
&= \frac{\mathbf{m}!}{(2\pi)^{\frac{3}{4}|\mathbf{m}|d}}\liminf_{n\to\infty}\,  n^{-\frac{|\mathbf{m}|}{2}}  \int_{\R^{|\mathbf{m}|d/2}}  \int_{D_{\mathbf{m},3}} 
            \prod^{N}_{i=1}\prod^{m_i/2}_{k=1} \left( |\widehat{f}(y^i_{2k})|^2(u^i_{2k-1})^{-1}\right)\\   \nonumber
&\qquad\qquad\times \exp\bigg(-\frac{1}{2}\big(1+\frac{c_2(1-2H)}{n^{1-2H}}\big)\sum^{N}_{i=1}\sum^{m_i/2}_{k=1} |y^i_{2k}|^2(u^i_{2k})^{2H} \bigg) \, du\, dy\\   \nonumber
&\geq C^{\frac{|\mathbf{m}|}{2}}_{f,d}\, \mathbf{m}! \liminf_{n\to\infty}\,  n^{-\frac{|\mathbf{m}|}{2}}  \int_{O_{\mathbf{m},1}} 
            \prod^{N}_{i=1}\prod^{m_i/2}_{k=1} (u^i_{2k-1})^{-1}\, d\overline{u}\\   \nonumber
&= C^{\frac{|\mathbf{m}|}{2}}_{f,d}\, \mathbf{m}! \liminf_{n\to\infty}\,  n^{-\frac{|\mathbf{m}|}{2}}  \int_{O_{\mathbf{m}}} 
            \prod^{N}_{i=1}\prod^{m_i/2}_{k=1} (u^i_{2k-1})^{-1}\, d\overline{u}.
\end{align}

Let
\begin{align*}
L_{\mathbf{m}}
&=\frac{\mathbf{m}!}{(2\pi)^{\frac{3}{4}|\mathbf{m}|d}}\liminf_{n\to\infty}\,  n^{-\frac{|\mathbf{m}|}{2}}  \int_{\R^{|\mathbf{m}|d/2}}  \int_{D_{\mathbf{m},3}} 
            \prod^{N}_{i=1}\prod^{m_i/2}_{k=1} \left( |\widehat{f}(y^i_{2k})|^2(u^i_{2k-1})^{-1}\right)\\
            &\qquad\qquad\times \exp\bigg(-\frac{1}{2}\big(1+\frac{c_2(1-2H)}{n^{1-2H}}\big)\sum^{N}_{i=1}\sum^{m_i/2}_{k=1} |y^i_{2k}|^2(u^i_{2k})^{2H} \bigg) \, du\, dy.
\end{align*}
Then
\begin{align*}
L_{\mathbf{m}}
&\leq \frac{\mathbf{m}!}{(2\pi)^{\frac{3}{4}|\mathbf{m}|d}}\liminf_{n\to\infty}\,  n^{-\frac{|\mathbf{m}|}{2}}  \int_{\R^{|\mathbf{m}|d/2}}  \int_{D_{\mathbf{m},3}} 
            \prod^{N}_{i=1}\prod^{m_i/2}_{k=1} \left( |\widehat{f}(y^i_{2k})|^2(u^i_{2k-1})^{-1}\right)\\
            &\qquad\qquad\times \exp\bigg(-\frac{1}{2}\sum^{N}_{i=1}\sum^{m_i/2}_{k=1} |y^i_{2k}|^2(u^i_{2k})^{2H} \bigg) \, du\, dy\\     
&= C^{\frac{|\mathbf{m}|}{2}}_{f,d}\, \mathbf{m}! \liminf_{n\to\infty}\, n^{-\frac{|\mathbf{m}|}{2}}  \int_{O_{\mathbf{m},1}} 
            \prod^{N}_{i=1}\prod^{m_i/2}_{k=1} (u^i_{2k-1})^{-1}\, d\overline{u}\\   \nonumber
&= C^{\frac{|\mathbf{m}|}{2}}_{f,d}\, \mathbf{m}! \liminf_{n\to\infty}\,  n^{-\frac{|\mathbf{m}|}{2}}  \int_{O_{\mathbf{m}}} 
            \prod^{N}_{i=1}\prod^{m_i/2}_{k=1} (u^i_{2k-1})^{-1}\, d\overline{u}.       
\end{align*}
Therefore, 
\begin{align}
L_{\mathbf{m}} \label{infeq}
= C^{\frac{|\mathbf{m}|}{2}}_{f,d}\, \mathbf{m}! \liminf_{n\to\infty}\,  n^{-\frac{|\mathbf{m}|}{2}}  \int_{O_{\mathbf{m}}} 
            \prod^{N}_{i=1}\prod^{m_i/2}_{k=1} (u^i_{2k-1})^{-1}\, d\overline{u}.       
\end{align}

When $H=\frac{1}{2}$, inequalities (\ref{sup1}) and (\ref{low1}) become identities. Recall the convergence of finite dimensional distribution for two-dimensional Brownian motion in (\ref{kkk}), Remark \ref{norm1}, (\ref{supeq}) and (\ref{infeq}). We can easily get
\begin{align} \label{lim}
 \mathbf{m}! \lim_{n\to\infty}  n^{-\frac{|\mathbf{m}|}{2}}  \int_{O_{\mathbf{m}}} 
            \prod^{N}_{i=1}\prod^{m_i/2}_{k=1} (u^i_{2k-1})^{-1}\, d\overline{u}=\mathbb{E} \Big[
\prod_{i=1}^N   \big( W(\ell(M^{-1}(b_i)))- W(\ell(M^{-1}(a_i))) \big)^{m_i}\Big].
\end{align}
Combining  (\ref{sup1}), (\ref{supeq}), (\ref{low1}), (\ref{infeq}) and (\ref{lim}) gives the required result.
\end{proof}
 
 \bigskip
 
{\medskip \noindent \textbf{Proof of Theorem \ref{main0}.}}    This follows easily
from Propositions \ref{odd} and \ref{even}.

\bigskip

\bigskip

\begin{minipage}[t]{0.45\textwidth}
{\bf Fangjun Xu}\\ \\
School of Finance and Statistics\\
East China Normal University \\
Shanghai, China 200241\\
\texttt{fangjunxu@gmail.com}
\end{minipage}
\hfill

\end{document}